\theoremstyle{plain}
\newtheorem{theorem}{Theorem}[section]
\newtheorem{corollary}[theorem]{Corollary}
\newtheorem{lemma}[theorem]{Lemma}
\theoremstyle{definition}
\newtheorem{definition}[theorem]{Definition}
\theoremstyle{remark}
\newtheorem{example}[theorem]{Example}
\numberwithin{theorem}{section}
\numberwithin{equation}{section}
\newcommand{\Z}{\mathbb{Z}}
\newcommand{\R}{\mathbb{R}}
\newcommand{\diam}{\mathrm{diam}}
\newcommand{\inrad}{\mathrm{inrad} \, }
\newcommand{\cl}{\overline}
\newcommand{\loc}{\mathrm{loc}}
\DeclareMathOperator{\divergence}{div}
\newcommand{\laplacian}{\Delta}
\DeclareMathOperator*{\osc}{osc}
\DeclareMathOperator*{\esssup}{ess\,sup}
\DeclareMathOperator*{\essinf}{ess\,inf}
\newcommand{\capacity}{\mathrm{cap}}
\newcommand{\wkto}{\rightharpoonup}
\newcommand{\trinorm}[1]
{{
    \left\vert\kern-0.20ex\left\vert\kern-0.20ex\left\vert
    #1 
    \right\vert\kern-0.20ex\right\vert\kern-0.20ex\right\vert
}}
\begin{document}


\title[Global H\"{o}lder Solvability of parabolic equations]
{Global H\"{o}lder Solvability of parabolic equations on domains with capacity density conditions}
\author{Takanobu Hara}
\email{takanobu.hara.math@gmail.com}
\address{Tohoku University, Mathematical Institute, 6-3, Aramaki Aza-Aoba, Aoba-ku, Sendai, Miyagi 980-8578, Japan}
\date{\today}
\subjclass[2020]{Primary: 35K20, Secondary: 35B65, 35R05, 31B15} 


\begin{abstract}
We investigate the Cauchy-Dirichlet problem for linear parabolic equations in divergence form. 
Under mild assumptions on the source term and the domain, we prove the existence of globally H\"{o}lder continuous solutions. 
Notably, our results accommodate data exhibiting singularities nearly as critical as the inverse square of the distance from the boundary.
\end{abstract}


\maketitle


\section{Introduction}

Consider the Cauchy-Dirichlet problem for parabolic equations of the form
\begin{align}
\mathcal{H} u := \frac{ \partial u }{ \partial t } - \divergence \left( A \nabla u \right) = f \quad & \text{in} \ D \times (0, T),
\label{eqn:parabolic}
\\
u = 0  \quad & \text{on} \ (\partial D) \times (0, T),
\label{eqn:BV}
\\
u(\cdot, 0) = 0  \quad & \text{in} \ D.
\label{eqn:IV}
\end{align}
Here, $D$ is an open set in $\R^{n}$ ($n \ge 2$),
$0 < T \le \infty$, and $\nabla u$ denotes the gradient of $u$ in $\R^{n}$.
The coefficient matrix $A = A(x, t)$ is a matrix-valued function in $L^{\infty}(D \times (0, T))^{n \times n}$ satisfying the uniform ellipticity and boundedness conditions
\begin{equation}\label{eqn:elliptic}
A(x, t) \xi \cdot \xi \ge |\xi|^{2}, \quad |A(x, t) \xi| \le L |\xi|  \quad \forall \xi \in \R^{n}, \ \forall (x, t) \in D \times (0, T)
\end{equation}
for some $1 \le L < \infty$.
Throughout the paper, do not impose any further regularity assumptions on $A$.
The source term $f$ is a function in $L^{1}_{\loc}(D \times (0, T))$.

We give an existence result of a globally H\"{o}lder continuous weak solution to \eqref{eqn:parabolic}-\eqref{eqn:IV}
under mild conditions on $f$ and $D$.
This result covers data whose singularity is nearly of the order of the inverse square of the distance to the boundary.

\subsection{Background}

The solvability of \eqref{eqn:parabolic} in H\"{o}lder spaces has been widely studied and remains a fundamental topic in the field. 
A comprehensive account of the regularity theory for such equations was established by Ladyzhenskaya, Solonnikov, and Ural'tseva in their classic monograph \cite{MR241822}.
Specifically, if
\begin{equation}\label{eqn:LSU}
f \in L^{p}( 0, T; L^{q}(D)) \quad \text{with} \quad \frac{n}{2q} + \frac{1}{p} < 1,
\end{equation}
then $u$ is locally H\"{o}lder continuous in $D \times (0, T)$. This condition has become standard in subsequent literature,
including \cite{MR226168, MR1465184}.
Regarding boundary regularity, it has been established that interior estimates extend up to a boundary point, provided that the complement of the spatial domain $D$ satisfies a suitable volume density condition at that point. Furthermore, it is well known that the same conclusion remains valid under the weaker assumption of the capacity of the complement in space-time (see, e.g., \cite{MR399659, MR563383, MR654859, MR672714, MR951629}).
These results imply that if a capacity density condition is satisfied uniformly at all boundary points, it follows that the solution is globally H\"{o}lder continuous.

Is the aforementioned story -- establishing local estimates and then extending them to the boundary -- truly absolute?
Let us reconsider this from a naive perspective.
It is natural to expect that good regularity estimates hold as long as the Dirichlet boundary condition \eqref{eqn:BV} is satisfied on a sufficiently large portion of the boundary.
In fact, the solution to \eqref{eqn:parabolic}-\eqref{eqn:IV} can be represented using
the Dirichlet heat kernel $p_{D}(x, y, t, s)$ as follows:
\begin{equation}\label{eqn:heat_kernel}
u(x, t)= \int_{0}^{T} \int_{D} p_D (x, y, t, s) f(y, s) \, dyds.
\end{equation}
For each $(x, t) \in D \times (0, T)$, $p_{D}(x, \cdot, t, \cdot)$ decays near the lateral boundary of $D \times (0, T)$ because it also satisfies a parabolic equation.
Therefore, the assumptions imposed on the boundary behavior of $f$ should be weaker than those required in the interior.
However, the standard approach ignores this additional regularity effect. 
A closer inspection of the standard proof reveals that
when deriving estimates at each boundary point, the assumption that the exterior of the domain is uniformly secured is not utilized.
Consequently, the standard approach only permits singularities of the form $f(x, t) = d_{\R^{n}}(x, \xi)^{\epsilon - 2}$ for a fixed $\xi \in \partial D$ (peak-like behavior)
and cannot treat $f$ with critical boundary behavior, such as $f(x, t) = d_{\R^{n}}(x, \partial D)^{\epsilon - 2}$ (wall-like behavior). 
Thus, the standard boundary regularity theory essentially proves itself to be non-sharp for typical domains, such as polygons.

On the other hand, although a direct application of \eqref{eqn:heat_kernel} merits consideration if $D$ has a smooth boundary, 
obtaining concrete conditions for $f$ becomes a non-trivial task if $D$ is complicated.
Pointwise estimates of the heat kernel on bounded domains have naturally been studied, but they impose strong assumptions on domains
(see, e.g., \cite{MR879702, MR1900329, MR3220451}).
Indeed, near the corners of a polygon, the boundary behavior of harmonic functions is not comparable to $d_{\R^{n}}(x, \partial D)$,
and no quantitative criterion for estimating $p_{D}$ can be found.
This approach is incompatible with the fact that boundary regularity can be established solely through assumptions on the exterior of the domain.
Should we really abandon the possibility of producing different results using the standard tools in regularity theory?

The present paper aims to fill this gap by constructing supersolutions to \eqref{eqn:parabolic} following the spirit of Ancona's method.
In \cite{MR856511}, Ancona constructed a supersolution to elliptic equations by gluing infinitely many harmonic measures. 
Subsequent developments have significantly advanced analysis on domains satisfying the capacity density condition 
(see, e.g., \cite{MR946438, MR1010807, MR1924196, haraCDC}), 
yet Ancona's contribution does not appear to have been fully appreciated. 
The author has recently pursued extensions and applications of this approach in related works \cite{MR4791992, haraholder, MR4971571}.
In this manuscript, we provide an extension to parabolic equations that is largely faithful to the original argument presented in \cite{MR856511}.

\subsection{Results}

Our main results establish the global regularity theory for parabolic equations by constructing a sharp barrier function tailored to domains with rough boundaries.
In Theorem \ref{thm:main}, we construct a supersolution to \eqref{eqn:parabolic}, which serves as a key barrier function.
By combining this barrier with the comparison principle,
we establish the existence and global H\"{o}lder regularity of weak solutions to
the Cauchy-Dirichlet problem \eqref{eqn:parabolic}-\eqref{eqn:IV} on domains satisfying the capacity density condition \eqref{eqn:CDC}
(Theorem \ref{thm:main2}).
A notable feature of this result is that it accommodates external forces with strong singularities, specifically allowing data that scales like the inverse square of the distance to the boundary.
The final result concerning the non-homogeneous boundary value problem is provided in Thoerem \ref{thm:main3}.
Although Theorem \ref{thm:main4} may be a known result, the author is unaware of any literature containing this statement.

\subsection*{Structure of the paper}

The remainder of this paper is organized as follows.
In Section \ref{sec:preliminaries}, we present standard results on parabolic equations needed for the subsequent discussion.
In Section \ref{sec:barrier}, we construct the desired barriers. This part is the main contribution of this paper. 
In Section \ref{sec:existence}, we discuss an existence theorem for H\"{o}lder continuous solutions. 
In Section \ref{sec:boundary_value_problem} extends the results of Section \ref{sec:existence} to handle boundary value problems. 
Finally, in Section \ref{sec:boundary_regularity}, we provide supplementary explanations for parts of Section \ref{sec:preliminaries} that require further clarification.

\subsection*{Notation}

To deal with parabolic equations, we introduce the following terminology.

\begin{itemize}
\item
For $X = (x, t), Y = (y, s) \in D \times \R$,
the parabolic distance $d_{p}(X, Y)$ is
\[
d_{p}(X, Y) = \max \left\{ d_{\R^{n}}(x, y), |t -s|^{1 / 2} \right \},
\]
where $d_{\R^{n}}(\cdot, \cdot)$ is the Euclidean distance in $\R^{n}$.
\item
For $X = (x, t) \in \R^{n} \times \R$ and $R > 0$,
the parabolic cylinder $Q(X, R)$ is defined by
\[
Q(X, R)
:=
B(x, R) \times (t - R^{2}, t).
\]
\item
Let $D \subset \R^{n}$ be an open set, and let $0 < T \le \infty$.
For the cylinder $D \times (0, T)$, we define a function $\delta_{\Gamma}$ on $D \times (0, T)$ by
\begin{equation}
\delta_{\Gamma}(X)
=
\inf_{Y \in \Gamma \times I \cup D \times \{ 0 \} } d_{p}(X, Y).
\end{equation}
\end{itemize}

Let $D$ be an open set in $\R^{n}$.
Denote by $H^{1}(D)$ the set of all weakly differentiable functions on $D$ such that
\[
\| u \|_{H^{1}(D)} 
:=
\left(
\int_{D} |u|^{2} \, dx
+
\int_{D} |\nabla u|^{2} \, dx
\right)^{1 / 2}
\]
is finite.
For $u \in H^{1}(D)$, we define
\begin{equation}\label{eqn:sobolev_bv}
\sup_{\partial D} u := \inf \left\{ k \in \R \colon (u - k)_{+} \in H_{0}^{1}(D) \right\},
\quad
\inf_{\partial D} u = -(\sup_{\partial D} - u).
\end{equation}

\section{Assumptions on $D$: Capacity density condition}\label{sec:preliminaries}

Here, we present preliminaries concerning the spatial domain $D$. 
Let $D \subset \R^{n}$ be an open set, and let $\Gamma$ be a nonempty closed subset of $\partial D$.
We consider the condition
\begin{equation}\label{eqn:CDC}
\exists \gamma > 0 \quad \text{s.t.} \quad
\frac{ \capacity( \cl{B(\xi, R)} \setminus D, B(\xi, 2R)) }{ \capacity( \cl{B(\xi, R)}, B(\xi, 2R)) } \ge \gamma
\quad \forall R > 0, \ \forall \xi \in \Gamma.
\end{equation}
Here, for the pair of an open set $U \subset \R^{n}$ and its compact subset $K \subset U$,
the variational capacity $\capacity(K, U)$ is defined by
\begin{equation}\label{eqn:variational_capacity}
\capacity(K, U)
:=
\inf
\left\{
\int_{\R^{n}} |\nabla u|^{2} \, dx \colon u \in C_{c}^{\infty}(U), \ u \ge 1 \, \text{on} \, K
\right\}.
\end{equation}
Bounded Lipschitz domains are typical examples of domains satisfying the CDC. The corkscrew condition is also a useful geometric sufficient condition. 
Furthermore, criteria based on Hausdorff measure are well known; for details, see \cite{MR2305115, MR4306765}. 
Below, we mention some examples that do not necessarily fall into these standard categories.

\begin{example}
When $n = 2$, any simply connected bounded open set $D \subset \R^{2}$ satisfies \eqref{eqn:CDC} with $\Gamma = \partial D$.
\end{example}

\section{Parabolic equations}

To consider parabolic equations, we introduce space-time functions spaces.
For an open set $D \subset \R^{n}$ and an open interval $I \subset \R$, we define
\[
V( D \times I )
:=
L^{\infty}(I ; L^{2}(D)) \cap L^{2}( I ; H^{1}(D)).
\]
The corresponding local space $V_{\loc}( D \times I )$ is defined in the usual manner.

We define local weak solutions to \eqref{eqn:parabolic} and discuss some regularity results for them.

\begin{definition}\label{def:weak_sol}
Let $u \in V_{\loc}( D \times I )$, and let $f \in L^{1}_{\loc}(D \times I)$.
We say that $u$ is a \textit{local weak supersolution (subsolution)} to $\mathcal{H} u = f$ in $D \times I$ if
\begin{equation}\label{eqn:wf1}
\iint_{D \times I} 
- u \frac{ \partial \varphi }{ \partial t }
+
A \nabla u \cdot \nabla \varphi 
\, dx dt
\ge (\le)
\iint_{D \times I} \varphi f \, dx dt
\end{equation}
for all $\varphi \in C_{c}^{\infty}(D \times I)$.
\end{definition}

Let us introduce the Steklov average of space-time functions.

\begin{definition}
Let $u \in L^{1}(D \times (t_{1}, t_{2}))$, and let $0 < h < t_{2} - t_{1}$.
For $t \in (t_{1}, t_{2} - h)$, we define the \textit{Steklov average} $u_{h}$ of $u$ by
\[
u_{h}
:=
\frac{1}{h} \int_{t}^{t + h} u(\cdot, \tau) \, d \tau.
\]
\end{definition}

If $u$ is a supersolution (subsolution) to $\mathcal{H} u = f$ in $D \times (t_{1}, t_{2})$ in the sense of Definition \ref{def:weak_sol}, then, we have
\begin{equation}\label{eqn:wf2}
\int_{ D \times \{ t \} }
\frac{ \partial u_{h} }{ \partial t } \varphi + [A \nabla u]_{h} \cdot \nabla \varphi \, dx
\ge (\le)
\int_{ D \times \{ t \} } \varphi f_{h} \, dx
\end{equation}
for any $t_{1} < t < t_{2} - h$ and $\varphi \in C_{c}^{\infty}(D)$.
Conversely, if $u \in V_{\loc}(D \times (t_{1}, t_{2}))$ satisfies \eqref{eqn:wf2} for all $\varphi \in C_{c}^{\infty}(D)$, $t_{1} < t < t_{2}$, and small $h$,
then, $u$ is a supersolution (subsolution) to $\mathcal{H} u = f$ in the sense of Definition \ref{def:weak_sol}.
In fact, by \cite[p.82]{MR241822}, if $f \in L^{r}(t_{1}, t_{2}; L^{q}(D) )$ and $0 < \epsilon < t_{2} - t_{1}$,
then, $f_{h} \to f$ in $L^{r}(t_{1}, t_{2} - \epsilon; L^{q}(D))$
as $h \to 0$. 

Substituting $\varphi = \epsilon / (( u_{h} - v_{h} )_{+} + \epsilon)  \eta$, ($\eta \in C_{c}^{\infty}(D \times I)$, $\eta \ge 0$) into \eqref{eqn:wf2},
and passing to the limit $h \to 0$,
we get the following energy estimate.

\begin{lemma}\label{lem:gluing}
Assume that $u$ and $v$ are weak supersolutions to $\mathcal{H} u = f$ in $D \times I$.
Then, $\min \{ u, v \}$ is a supersolution to the same equation.
\end{lemma}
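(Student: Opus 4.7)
The plan is to follow the testing scheme outlined in the hint. Define $g_\epsilon(s) := \epsilon / (s_+ + \epsilon)$, a Lipschitz approximation of $\chi_{\{s \le 0\}}$ satisfying $0 < g_\epsilon \le 1$, $g_\epsilon(s) \to \chi_{\{s \le 0\}}$ pointwise as $\epsilon \downarrow 0$, and $g_\epsilon'(s) = -\epsilon (s_+ + \epsilon)^{-2} \chi_{\{s > 0\}} \le 0$. Working with the Steklov-averaged formulation \eqref{eqn:wf2}, I test the $u_h$-inequality with $\varphi_1 := \eta g_\epsilon(u_h - v_h)$ and the $v_h$-inequality with $\varphi_2 := \eta (1 - g_\epsilon(u_h - v_h))$, where $\eta \in C_c^\infty(D \times I)$ with $\eta \ge 0$. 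The identity $\varphi_1 + \varphi_2 = \eta$ is the crucial structural feature: after summation and integration in $t$, the right-hand side collapses cleanly to $\iint f_h \eta \, dx\,dt$, avoiding any obstruction on the coincidence set $\{u_h = v_h\}$.

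The summed left-hand side decomposes into a main part
\begin{equation*}
M_{\epsilon, h} := \iint \eta \bigl[ g_\epsilon(u_h - v_h) \partial_t u_h + (1 - g_\epsilon(u_h - v_h)) \partial_t v_h \bigr] + \bigl[ g_\epsilon(u_h - v_h) [A\nabla u]_h + (1 - g_\epsilon(u_h - v_h)) [A\nabla v]_h \bigr] \cdot \nabla \eta \, dx\,dt
\end{equation*}
and a single extra contribution arising from $\nabla g_\epsilon(u_h - v_h)$,
\begin{equation*}
I_{\epsilon, h} := -\iint \eta \cdot \frac{\epsilon \chi_{\{u_h > v_h\}}}{((u_h - v_h)_+ + \epsilon)^2} \bigl( [A\nabla u]_h - [A\nabla v]_h \bigr) \cdot \nabla (u_h - v_h) \, dx\,dt.
\end{equation*}
Using the strong $L^2_{\loc}$-convergence of Steklov averages together with the uniform ellipticity \eqref{eqn:elliptic}, which yields $A \nabla(u - v) \cdot \nabla(u - v) \ge |\nabla(u - v)|^2 \ge 0$, I obtain $\limsup_{h \to 0} I_{\epsilon, h} \le 0$.

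Passing $h \to 0$ in the inequality $M_{\epsilon, h} + I_{\epsilon, h} \ge \iint f_h \eta$ therefore yields $M_\epsilon \ge \iint f \eta$, where $M_\epsilon$ denotes the $h$-limit of $M_{\epsilon, h}$. Finally, letting $\epsilon \to 0$ by dominated convergence, using $g_\epsilon(u - v) \to \chi_{\{u \le v\}}$ and $1 - g_\epsilon(u - v) \to \chi_{\{u > v\}}$ pointwise, together with the a.e.\ identities $\nabla \min\{u, v\} = \chi_{\{u \le v\}} \nabla u + \chi_{\{u > v\}} \nabla v$ and its $\partial_t$-analog (both justified on $\{u = v\}$ via a Stampacchia-type lemma applied to $u_h - v_h$ before sending $h \to 0$), I arrive at
\begin{equation*}
\iint \partial_t \min\{u, v\} \cdot \eta + A \nabla \min\{u, v\} \cdot \nabla \eta \, dx\,dt \ge \iint f \eta \, dx\,dt
\end{equation*}
for every nonnegative $\eta \in C_c^\infty(D \times I)$, which is the desired supersolution property.

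The main technical subtlety lies in coordinating the limits $h \to 0$ and $\epsilon \to 0$ and in correctly identifying the distributional time derivative of $\min\{u, v\}$. The Steklov regularization is essential because it legitimizes testing against the nonlinear composition $g_\epsilon(u_h - v_h)$ via pointwise-in-$t$ calculations, while the choice of test functions summing to $\eta$ neutralizes the ambiguity on the coincidence set $\{u = v\}$.
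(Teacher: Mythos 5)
Your overall strategy matches the paper's intended one-line hint: test the two supersolution inequalities against the complementary Steklov-averaged test functions $\eta\,g_\epsilon(u_h-v_h)$ and $\eta\,(1-g_\epsilon(u_h-v_h))$, exploit $g_\epsilon'\le 0$ together with ellipticity to discard the $I_{\epsilon,h}$ contribution, and pass $h\to 0$ then $\epsilon\to 0$. The choice of test functions summing to $\eta$, so the source term collapses, is exactly the right structural observation.

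There is, however, one step that does not go through as written. You define $M_\epsilon$ as ``the $h$-limit of $M_{\epsilon,h}$'', but $M_{\epsilon,h}$ contains $\partial_t u_h$ and $\partial_t v_h$, and these difference quotients do not converge in $L^2_{\loc}$ (nor in any topology that would make the limit of $M_{\epsilon,h}$ meaningful term by term), since $u,v\in V_{\loc}$ carry no $L^2$ time derivative. Before sending $h\to 0$ you must first combine
\[
g_\epsilon(w_h)\,\partial_t u_h + (1-g_\epsilon(w_h))\,\partial_t v_h
\;=\;
\partial_t v_h + \partial_t G_\epsilon(w_h),
\qquad
w_h := u_h - v_h,\quad G_\epsilon(s):=\int_0^s g_\epsilon(\tau)\,d\tau,
\]
and then integrate by parts in $t$ against $\eta$ so that the time derivative lands on the smooth test function. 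Only after this can you pass $h\to 0$ (using strong $L^2_{\loc}$ convergence of $v_h$ and of $G_\epsilon(w_h)$, the latter since $G_\epsilon$ is $1$-Lipschitz), and then $\epsilon\to 0$: because $G_\epsilon(s)\to\min\{s,0\}=-s_-$ boundedly, the time-derivative term converges to $-\iint \min\{u,v\}\,\partial_t\eta$, which is exactly the distributional form required by Definition~\ref{def:weak_sol}. Correspondingly, the time-derivative term in your final display should read $-\min\{u,v\}\,\partial_t\eta$ rather than $\partial_t\min\{u,v\}\cdot\eta$, since $\min\{u,v\}$ is not known to possess a time derivative. With this fix the argument is complete and coincides with the paper's intended proof.
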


Substituting $\varphi = u_{h} \eta^{2}$, $\eta \in C_{c}^{\infty}(Q(X, 2R))$ into \eqref{eqn:wf2},
and passing to the limit $h \to 0$,
we get the following energy estimate.

\begin{lemma}\label{lem:energy_estimate}
Let $f \in L^{\infty}(D \times I)$, and let $u \in V_{\loc}(D \times I)$ be
a local weak subsolution to $\mathcal{H} u = f$ in $D \times I$.
Assume that $Q(X, 2R) \subset D \times I$.
Then,
\[
\begin{split}
& 
\frac{1}{R^{n}} \iint_{Q(X, R)} |\nabla u|^{2} \, dx dt
\\
& \quad
\le
\frac{C}{R^{n + 2}} \iint_{Q(X, 2R)} |u|^{2} \, dx dt
+
C \left( R^{2} \| f \|_{L^{\infty}(Q(X, 2R))} \right)^{2},
\end{split}
\]
where $C$ is a constant depending only on $n$ and $L$.
\end{lemma}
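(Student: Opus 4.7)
The plan is to follow the hint preceding the lemma: substitute $\varphi = u_h \eta^2$ for a suitable parabolic cutoff $\eta$ into the Steklov-averaged identity \eqref{eqn:wf2}, integrate in time, pass $h \to 0$, and invoke the ellipticity \eqref{eqn:elliptic}. This is the classical parabolic Caccioppoli/energy estimate.

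Writing $X = (x, t)$, I would choose a spatial cutoff $\eta_1 \in C_c^\infty(B(x, 2R))$ with $\eta_1 \equiv 1$ on $B(x, R)$ and $|\nabla \eta_1| \le C/R$, and a temporal cutoff $\eta_2 \in C^\infty(\R)$ satisfying $\eta_2(s) = 0$ for $s \le t - 4R^2$, $\eta_2(s) = 1$ for $s \ge t - R^2$, and $0 \le \eta_2' \le C/R^2$. For small $h > 0$ and any $\tau \in (t - R^2, t - h)$, I insert $\varphi = u_h \eta_1^2 \eta_2^2$ into \eqref{eqn:wf2} at each time slice $s$ and integrate $s$ from $t - 4R^2$ to $\tau$. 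The time-derivative term becomes, via $2 u_h \partial_s u_h = \partial_s(u_h^2)$ and integration by parts in $s$,
\[
\frac{1}{2} \int_{B(x, 2R)} u_h(\tau)^2 \eta_1^2 \eta_2(\tau)^2 \, dy \; - \int_{t - 4R^2}^{\tau} \int_{B(x, 2R)} u_h^2 \eta_1^2 \eta_2 \eta_2' \, dy \, ds.
\]
The first summand is nonnegative and may be dropped; the second is bounded by $(C/R^2) \iint_{Q(X, 2R)} u_h^2$ thanks to $|\eta_2'| \le C/R^2$.

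Next, I pass $h \to 0$ using the $L^2_{\loc}$ convergence $u_h \to u$, $\nabla u_h \to \nabla u$, $[A \nabla u]_h \to A \nabla u$ and the $L^1_{\loc}$ convergence $f_h \to f$ recalled in the text between \eqref{eqn:wf2} and Lemma \ref{lem:gluing}. The diffusion term in the limit is $\iint A \nabla u \cdot \nabla(u \eta_1^2) \eta_2^2$. Expanding $\nabla(u \eta_1^2) = \eta_1^2 \nabla u + 2 u \eta_1 \nabla \eta_1$, using \eqref{eqn:elliptic} and Young's inequality yields
\[
\iint A \nabla u \cdot \nabla(u \eta_1^2) \eta_2^2 \; \ge \; \tfrac{1}{2} \iint |\nabla u|^2 \eta_1^2 \eta_2^2 \; - \; C(L) \iint u^2 |\nabla \eta_1|^2 \eta_2^2.
\]
For the source, Young's inequality in the form $|f u| \le u^2/(2 R^2) + R^2 f^2 / 2$, integrated over $Q(X, 2R)$ (of measure $\simeq R^{n + 2}$), gives the bound $(C/R^2) \iint_{Q(X, 2R)} u^2 + C R^{n + 4} \| f \|_{L^{\infty}(Q(X, 2R))}^2$.

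Combining the three contributions, restricting $\iint |\nabla u|^2 \eta_1^2 \eta_2^2$ to $Q(X, R)$ (on which $\eta_1 \eta_2 \equiv 1$), using $|\nabla \eta_1|^2 + |\eta_2'| \le C/R^2$ on the right, and dividing through by $R^n$ produces exactly the stated inequality. The one point I expect to watch is the order of operations: ellipticity must be applied \emph{after} the limit $h \to 0$, because $[A \nabla u]_h \cdot \nabla u_h$ is generally not pointwise coercive when $A$ depends on $t$, whereas $A \nabla u \cdot \nabla u \ge |\nabla u|^2$ holds a.e.~by \eqref{eqn:elliptic}. The time-integration by parts on $u_h$ itself is routine since Steklov averages are absolutely continuous in $s$ with values in $L^2(D)$, so the only remaining work is bookkeeping with Young's inequality.
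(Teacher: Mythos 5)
Your proposal is correct and follows exactly the one-line hint the paper gives before the lemma (substitute $\varphi = u_h\eta^2$ into \eqref{eqn:wf2} and pass $h\to 0$), expanded into the standard parabolic Caccioppoli argument with the cutoff split into spatial and temporal factors, the integration by parts in time, and Young's inequality on the source. Your observation that the temporal cutoff should equal one near the top of $Q(X,2R)$ rather than being compactly supported, and that ellipticity must be applied only after the Steklov limit $h\to 0$, are precisely the right technical points to flag.
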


Using De Giorgi or Moser's iteration technique, we get the following $L^{\infty}$ estimate.
See, e.g., \cite[Chapter III, Section 7]{MR241822} or \cite[Theorem 6.17]{MR1465184}.

\begin{lemma}\label{lem:global_boundedness}
Assume that $\diam (D) \le R$.
Let $u \in V(D \times (0, T))$ be a weak solution to $\mathcal{H} u = f$ in $D \times (0, T)$.
Then, we have
\[
\esssup_{ D \times (0, T)} u 
\le
\max \left\{ \sup_{0 < t < T} \sup_{\partial D} u(\cdot, t), \ \esssup_{x \in D} u(x, 0) \right\}
+
C R^{2} \| f \|_{L^{\infty}(D \times (0, T))}
\]
where $C$ is a positive constant depending only on $n$ and $L$.
\end{lemma}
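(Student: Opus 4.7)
The plan is to reduce to the standard De Giorgi level-set iteration for parabolic subsolutions, with the boundary values handled via \eqref{eqn:sobolev_bv}. Set
\[
M := \max \Bigl\{ \sup_{0 < t < T} \sup_{\partial D} u(\cdot,t),\ \esssup_{x \in D} u(x,0) \Bigr\}.
\]
Since $M$ is a constant, $u - M$ still solves $\mathcal{H}(u-M) = f$, so I may replace $u$ by $u-M$ and assume $M = 0$; then by \eqref{eqn:sobolev_bv}, $u_+(\cdot,t) \in H_0^1(D)$ for a.e.\ $t \in (0,T)$, and $u_+(\cdot,0) = 0$. The claim reduces to $\esssup u \le C R^{2} \|f\|_{L^\infty}$, and by the parabolic scaling $(x,t) \mapsto (Rx, R^{2} t)$ it suffices to treat $R = 1$.

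For each $k \ge 0$, the truncation $(u_h - k)_+$ lies in $H_0^1(D)$ for a.e.\ $t$ by \eqref{eqn:sobolev_bv} together with $k \ge 0 = M$, hence is an admissible test function. Substituting it into the Steklov-averaged equation \eqref{eqn:wf2}, integrating over $D \times (0,\tau)$, sending $h \to 0$, and using the ellipticity \eqref{eqn:elliptic} produce a Caccioppoli-type bound of the form
\[
\sup_{0 < \tau < T} \int_D [(u-k)_+]^{2}(\cdot,\tau)\,dx + \iint_{D \times (0,T)} |\nabla (u-k)_+|^{2} \,dx\,dt \le C \iint_{A_k} (u-k)_+ |f|\,dx\,dt,
\]
where $A_k := \{u > k\}$. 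Interpolating the two left-hand terms via the parabolic Sobolev embedding (applicable because $(u-k)_+(\cdot,t) \in H_0^{1}(D)$) controls $\iint_{A_k} (u-k)_+^{2(1+2/n)}$; combining this with H\"older's inequality on the right-hand side and with the bound $|D| \le C_n$ implied by $\diam(D) \le 1$ yields the standard recursion
\[
\Phi(k') \le \frac{C \|f\|_{L^\infty}^{2}}{(k' - k)^{2}} \Phi(k)^{1+\alpha} \qquad (k' > k \ge 0)
\]
for some $\alpha = \alpha(n) > 0$, where $\Phi(k)$ denotes the appropriate De Giorgi class quantity. Stampacchia's iteration lemma then forces $\Phi \equiv 0$ once $k$ exceeds a universal multiple of $\|f\|_{L^\infty}$; undoing the scaling delivers the claimed bound.

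I expect the only genuinely delicate point to be bookkeeping: confirming that $(u_h - k)_+$ is admissible in $H_0^{1}(D)$ for a.e.\ $t$ via the definition \eqref{eqn:sobolev_bv}, and then tracking constants through the recursion to verify the $R^{2}$ dependence (rather than a $T$-dependence or a $|D|$-dependence). Neither step involves new ideas, and the full argument reproduces that of \cite[Chapter III, Section 7]{MR241822} or \cite[Theorem 6.17]{MR1465184}.
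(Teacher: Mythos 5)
The paper itself does not prove this lemma---it invokes ``De Giorgi or Moser's iteration'' and cites \cite{MR241822, MR1465184}---so your De Giorgi level-set scheme is aligned with the paper's stated technique. The reduction to $M=0$ via \eqref{eqn:sobolev_bv}, the scaling to $R=1$, and the Caccioppoli inequality obtained from testing with $(u_h-k)_+$ (which is admissible precisely because $k\ge 0=M$ and $(u-k)_+(\cdot,0)=0$) are all correct. However, your concluding step hides a genuine gap: you claim that Stampacchia's lemma ``forces $\Phi\equiv 0$ once $k$ exceeds a universal multiple of $\|f\|_{L^\infty}$,'' but Stampacchia's lemma terminates the iteration at a level $d$ of the form $d \sim C_0^{1/\beta}\Phi(0)^{\alpha/\beta}$, and $\Phi(0)$ is \emph{not} universally bounded. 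Whatever natural De Giorgi quantity you choose---$|A_k|=|\{u>k\}\cap D\times(0,T)|$, $\iint_{A_k}(u-k)_+^2$, or the energy $\sup_\tau\!\int v_k^2+\iint|\nabla v_k|^2$---its value at $k=0$ involves the time length $T$ (e.g.\ $|A_0|\le|D|\cdot T$). Bounding $|D|\le C_n$ handles the spatial extent, but nothing in your sketch kills the $T$-dependence, and the paper applies this lemma with $T=\infty$ (in the proof of Lemma \ref{lem:barrier1}); so the constant $C=C(n,L)$ must really be independent of $T$. You flag this explicitly as a ``bookkeeping'' concern, but it is a missing idea rather than bookkeeping: the iteration you set up simply produces a bound that grows with $T$.

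The assumption $\diam D\le R$ is doing more work than you give it credit for. Its essential use is not merely $|D|\le C_n R^n$, but the spatial Poincar\'e inequality $\int_D \phi^2\le C_P R^2\int_D|\nabla\phi|^2$ for $\phi\in H_0^1(D)$, i.e.\ a spectral gap of order $R^{-2}$. This is what furnishes a $T$-uniform estimate: inserting Poincar\'e into the differential energy inequality and applying Gronwall with the resulting exponential decay gives $\sup_{\tau}\int_D(u-k)_+^2(\cdot,\tau)\,dx\le C R^4\|f\|_{L^\infty}^2|D|$ for every $k\ge 0$, uniformly in $\tau$ and $T$. With this uniform $L^2$-in-time bound in hand, one runs a local $L^2\!\to\!L^\infty$ estimate (De Giorgi or Moser) on time slabs $D\times(\tau_0-cR^2,\tau_0)$ with the lateral condition $u\le 0$, to conclude $\esssup_{D\times(0,T)}u\le CR^2\|f\|_{L^\infty}$ with $C=C(n,L)$. (Alternatively, for constant $A$ one can compare against the elliptic barrier $v(x)=\frac{R^2-|x-x_0|^2}{2n}$; for measurable $A$ that barrier is unavailable and the spectral-gap route is the natural one.) Without this additional step, the recursion you write down does not deliver a $T$-independent conclusion, and therefore does not prove the lemma as stated.
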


For nonnegative supersolutions to $\mathcal{H} u = f$,
we have the following weak Harnck inequality (Lemma \ref{lem:whi}),
which implies local H\"{o}lder continuity of solutions,
Therefore, we treat $u$ as continuous without further explanation.
This is a classical estimate (see, e.g., \cite{MR159139, MR288405, MR1465184}), and for more recent proofs,
see, for example, \cite{MR2760150}.

\begin{lemma}\label{lem:whi}
Let $f \in L^{\infty}(D \times (0, T))$.
Assume that $\mathcal{H} u \ge f$, $u \ge 0$ in $D \times (0, T)$
and $Q( X_{0} , 4R) \subset D \times (0, T)$.
Let $0 < p_{\star} < (n + 2) / n$.
Then, there exists a constant $C$ depending only on $n$, $L$ and $p_{\star}$ such that
\[
\left( \frac{1}{R^{n + 2}} \iint_{ Q_{-} } u^{p_{\star}} \, dx dt  \right)^{1 / p_{\star}}
\le
C \left(
\essinf_{Q( X_{0}, R )} u + R^{2} \| f_{-} \|_{ L^{\infty}(X_{0}, 4R) }
\right)
\]
where $X_{0} = (x_{0}, t_{0})$, $Q_{-} = B(x_{0}, 3R) \times (t_{0} - 16 R^{2}, t_{0} - 8 R^{2})$.
\end{lemma}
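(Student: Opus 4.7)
The plan is to follow the classical Moser iteration scheme adapted to parabolic equations, combining three ingredients: a negative-power reverse H\"{o}lder estimate for $v^{-1}$ on the future cylinder, a forward $L^{p}$-to-$L^{p_{\star}}$ bound for $v$ on the past cylinder, and a logarithmic estimate that bridges the two. As a preliminary normalization, I would set $v := u + k$ with $k := R^{2} \| f_{-} \|_{L^{\infty}(Q(X_{0}, 4R))} + \varepsilon$ for arbitrary $\varepsilon > 0$, and rescale parabolically so that $R = 1$ and $X_{0} = 0$. Then $v$ is a strictly positive supersolution of a parabolic inequality whose right-hand side is controlled by $v$ (since $|f|/v \le 1/R^{2}$), and sending $\varepsilon \to 0$ at the end yields the claim.

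First I would derive Caccioppoli-type estimates for powers $v^{\alpha}$ ($\alpha \ne 0$) by inserting test functions $\varphi = \eta^{2} v_{h}^{\alpha - 1}$ into the Steklov-averaged weak formulation \eqref{eqn:wf2}, applying the ellipticity \eqref{eqn:elliptic}, absorbing gradient terms via Young's inequality, and using the favorable sign of the parabolic term at the endpoints of the time interval. Combined with the parabolic Sobolev embedding $V(Q) \hookrightarrow L^{2(n+2)/n}(Q)$, standard Moser iteration then produces, on the one hand, a local $L^{\infty}$ bound
\[
\esssup_{Q(0,1)} v^{-1} \le C \left( \frac{1}{|Q(0,2)|} \iint_{Q(0,2)} v^{-s} \, dX \right)^{1/s} \qquad (s > 0),
\]
and on the other hand a forward reverse-H\"{o}lder bound
\[
\left( \frac{1}{|Q^{-}|} \iint_{Q^{-}} v^{p_{\star}} \, dX \right)^{1/p_{\star}} \le C \left( \frac{1}{|\widetilde{Q}^{-}|} \iint_{\widetilde{Q}^{-}} v^{p} \, dX \right)^{1/p}
\]
for $0 < p < p_{\star} < (n+2)/n$, where $\widetilde{Q}^{-}$ is a slightly enlarged past cylinder. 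The threshold $p_{\star} < (n+2)/n$ enters through this parabolic Sobolev exponent.

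The third and most delicate ingredient is the parabolic logarithmic estimate, obtained by inserting $\varphi = \eta^{2}/v_{h}$ into \eqref{eqn:wf2}, passing $h \to 0$, slicing in time, and invoking the Poincar\'e inequality. The outcome is a parabolic John--Nirenberg bound
\[
|\{\log v > \lambda + a\} \cap Q^{-}| + |\{\log v < -\lambda + a\} \cap Q(0,1)| \le \frac{C}{\lambda} |Q(0, 4)| \qquad (\lambda > 0)
\]
for some constant $a$ depending on $v$. The time asymmetry between $Q^{-}$ (in the past) and $Q(0, 1)$ (at the final time) reflects the irreversibility of the heat equation and is precisely what permits a past-to-future comparison. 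The Bombieri--Giusti crossover lemma then converts these level-set bounds into a small $s_{0} > 0$ with
\[
\left( \frac{1}{|\widetilde{Q}^{-}|} \iint_{\widetilde{Q}^{-}} v^{s_{0}} \, dX \right)^{1/s_{0}} \le C \left( \frac{1}{|Q(0,2)|} \iint_{Q(0,2)} v^{-s_{0}} \, dX \right)^{-1/s_{0}},
\]
and chaining the three displayed bounds (forward Moser, crossover, backward Moser) produces
\[
\left( \frac{1}{|Q^{-}|} \iint_{Q^{-}} v^{p_{\star}} \, dX \right)^{1/p_{\star}} \le C \essinf_{Q(0,1)} v,
\]
which, after unscaling and sending $\varepsilon \to 0$, is the asserted weak Harnack inequality.

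The main obstacle is the logarithmic estimate and its coupling with the Bombieri--Giusti iteration: the test function $\varphi = \eta^{2}/v_{h}$ is only legitimate after Steklov regularization, and the level-set bounds must be organized with precisely the right past/future asymmetry for the crossover to close. Once the normalized positive supersolution $v$ is in hand, the Caccioppoli calculations and Moser iteration steps themselves are entirely routine.
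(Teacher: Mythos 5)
The paper does not actually prove Lemma~\ref{lem:whi}; it cites it as a classical result (Moser, Trudinger/Aronson--Serrin, Lieberman's book) and simply explains that the shift $v = u + k$ with $k = R^{2}\|f_{-}\|_{L^{\infty}}$ reduces the inhomogeneous case to the homogeneous one. Your outline --- normalization to a positive supersolution, Caccioppoli estimates for powers of $v$ via Steklov-regularized test functions $\varphi = \eta^{2} v_{h}^{\alpha-1}$, parabolic Sobolev embedding, the two Moser chains (negative powers toward $\essinf$, positive small powers toward $L^{p_{\star}}$ with $p_{\star} < (n+2)/n$), the log estimate from $\varphi = \eta^{2}/v_{h}$ with the correct past/future asymmetry, and the Bombieri--Giusti crossover --- is exactly the approach of those references, so it is essentially the same proof the paper implicitly invokes. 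One minor slip: you justify the normalization by ``$|f|/v \le 1/R^{2}$,'' but since $\mathcal{H}u \ge f$ only the negative part matters; the correct and sufficient observation is $f_{-}/v \le 1/R^{2}$ (a large $f_{+}$ would break your stated inequality but is irrelevant for a supersolution).
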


Using this lemma and a Harnack chain argument, we obtain the strong minimum principle:
Let $D$ be a connected open set.
If $u$ is a nonnegative supersolution to $\mathcal{H} u \ge 0$ in $D \times (0, T)$, 
and if $u(x, T) = 0$ for some $x \in D$, then $u(y, s) \equiv 0$ for all $(y, s) \in D \times (0, T)$.

We also have the following comparison principle.

\begin{lemma}\label{lem:comparison}
Assume that $u, v \in  V_{\loc}(D \times (0, T)) \cap C(\cl{ D \times (0, T)} )$ satisfy
$\mathcal{H} u \le \mathcal{H} v$ in $D \times (0, T)$,
$u \le v$ on $\partial D \times (0, T)$ and 
$u(\cdot, 0) \le v(\cdot, 0)$ in $D$.
Then, $u \le v$ in $D \times (0, T)$.
\end{lemma}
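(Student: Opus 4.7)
The plan is to apply an $L^{2}$ energy argument to the difference $w := u - v$. By linearity of $\mathcal{H}$ and the hypothesis $\mathcal{H} u \le \mathcal{H} v$, the function $w$ is a local weak subsolution of the homogeneous inequality $\mathcal{H} w \le 0$ in $D \times (0, T)$, while the continuity of $u, v$ on $\cl{D \times (0, T)}$ together with the boundary and initial inequalities yields $w \le 0$ on the parabolic boundary. The goal is to promote this boundary information to $w \le 0$ throughout the cylinder.

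Fix $\epsilon > 0$ and $\tau \in (0, T)$, and set $w_{\epsilon} := (w - \epsilon)_{+}$. By the continuity of $w$ on $\cl{D \times (0, T)}$ and the boundary hypotheses, the set $\{ w \ge \epsilon \} \cap (\cl{D} \times [0, \tau])$ is a compact subset of $D \times (0, \tau]$, bounded away from $\partial D$ and from $t = 0$. Hence $\spt w_{\epsilon}(\cdot, t) \Subset D$ uniformly in $t \in [0, \tau]$, and $w_{\epsilon} \equiv 0$ on some strip $D \times [0, \delta)$. I would then test the Steklov-averaged subsolution inequality \eqref{eqn:wf2} for $w$ with $\varphi = (w_{h} - \epsilon)_{+}$ (after mollification in $x$ and multiplication by a spatial cutoff equal to $1$ on these uniform supports, which eventually drops out), integrate in $t$ from $0$ to $\tau$, and pass $h \to 0$. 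The standard identity
\[
\int_{0}^{\tau} \int_{D} \partial_{t} w_{h} \cdot (w_{h} - \epsilon)_{+} \, dx \, dt \longrightarrow \tfrac{1}{2} \int_{D} w_{\epsilon}(x, \tau)^{2} \, dx
\]
(the contribution at $t = 0$ vanishing because $w_{h} \le \epsilon$ there for small $h$, by continuity) together with the ellipticity estimate $A \nabla w \cdot \nabla w_{\epsilon} \ge |\nabla w_{\epsilon}|^{2}$ (which uses $\nabla w = \nabla w_{\epsilon}$ on $\{ w > \epsilon \}$) yields
\[
\tfrac{1}{2} \int_{D} w_{\epsilon}(x, \tau)^{2} \, dx + \int_{0}^{\tau} \int_{D} |\nabla w_{\epsilon}|^{2} \, dx \, dt \le 0,
\]
forcing $w_{\epsilon}(\cdot, \tau) \equiv 0$, i.e.\ $w(\cdot, \tau) \le \epsilon$. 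Sending $\epsilon \downarrow 0$ and noting that $\tau \in (0, T)$ is arbitrary gives $w \le 0$ in $D \times (0, T)$.

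The main obstacle is purely technical: the formulation \eqref{eqn:wf2} admits only $\varphi \in C_{c}^{\infty}(D)$, whereas the natural test function $(w_{h} - \epsilon)_{+}$ is at best in $H_{0}^{1}(D)$. One therefore needs to regularize $(w_{h} - \epsilon)_{+}$ by mollification, pass the limits in $h$ and in the mollification parameter in the correct order, and verify that the spatial cutoff is ultimately redundant once the support of $(w_{h} - \epsilon)_{+}$ stabilizes as $h \to 0$. This is precisely where the continuity hypothesis $u, v \in C(\cl{D \times (0, T)})$ is essential: it secures the uniform-in-$t$ compact containment $\spt w_{\epsilon}(\cdot, t) \Subset D$, which the weaker regularity $V_{\loc}$ alone would not supply.
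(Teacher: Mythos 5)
Your energy argument is the standard textbook proof; the paper leaves Lemma~\ref{lem:comparison} without a proof, treating it as classical, so there is no competing argument in the text to compare against. For bounded $D$ --- the only case the paper actually uses, e.g.\ on $D \cap B$ in Lemma~\ref{lem:barrier1}, on $D \cap 2B$ in Theorem~\ref{thm:necessity}, and via the barrier bound in Theorem~\ref{thm:existence} --- your argument is correct, including the careful handling of the Steklov average, the cutoff, and the term at $t = 0$.

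The gap is in the compactness claim. Continuity of $w$ on $\cl{D \times (0, T)}$ makes $\{ w \ge \epsilon \} \cap (\cl{D} \times [0, \tau])$ relatively closed, and the parabolic-boundary inequalities keep it disjoint from $(\partial D \times [0, \tau]) \cup (\cl{D} \times \{0\})$, but closedness plus separation from the boundary yields compactness in $D \times (0, \tau]$ only when $\cl{D} \times [0, \tau]$ is itself compact, i.e.\ when $D$ is bounded. The lemma as stated does not assume this, and for unbounded $D$ the conclusion is in fact false without a growth restriction: Tychonoff's nonuniqueness example for the heat equation on $\R^{n}$ is smooth, hence lies in $V_{\loc}(\R^{n} \times (0, T)) \cap C(\R^{n} \times [0, T])$, has zero initial data, and is not identically $\le 0$; taking $v \equiv 0$ gives a counterexample since $\partial D = \emptyset$ makes the lateral boundary condition vacuous. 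You should either add the hypothesis that $D$ is bounded (which matches every use in the paper), or make explicit that your argument additionally requires $\{ w \ge \epsilon \}$ to be bounded for each $\epsilon > 0$.
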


The following interior H\"{o}lder estimate is a standard consequence of Lemma \ref{lem:whi}.

\begin{lemma}\label{lem:interior_hoelder_esti}
$Q(X_{0}, 2R) \subset D \times (0, T)$.
Assume that $\mathcal{H} u = f$ in $D \times (0, T)$ and $f \in L^{\infty}(D \times (0, T))$.
Then, $u$ is H\"{o}lder continuous at $X_{0}$.
Moreover,
\begin{equation}
\osc_{ Q(X_{0}, r) } u
\le
C \left( \frac{r}{R} \right)^{\alpha_{0}} \left( \osc_{Q(X_{0}, R)} u + R^{2} \| f \|_{L^{\infty}(Q(X_{0}, R) )} \right)
\end{equation}
for all $0 < r \le R$, where $C$ and $\alpha_{0}$ are constants depending only on
$n$, $L$.
\end{lemma}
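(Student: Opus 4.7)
The plan is the standard De Giorgi--Nash--Moser strategy: extract a one-step oscillation decay from the weak Harnack inequality (Lemma \ref{lem:whi}), then iterate. Set
\[
M := \sup_{Q(X_0, R)} u, \quad m := \inf_{Q(X_0, R)} u, \quad \omega := M - m,
\]
and consider the two nonnegative functions $v := u - m$ and $w := M - u$ on $Q(X_0, R)$. Each is a weak supersolution of an equation of the form $\mathcal{H} \cdot \ge g$ with $\|g_-\|_\infty \le \|f\|_\infty$.

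Apply Lemma \ref{lem:whi} at the base point $X_0$ with scale $R/4$. The hypothesis $Q(X_0, R) \subset D \times (0, T)$ is in force, and the associated past cylinder $Q_- = B(x_0, 3R/4) \times (t_0 - R^2, t_0 - R^2/2)$ is contained in $Q(X_0, R)$, so that $v, w \ge 0$ on $Q_-$. Fixing some $p_\star \in (0, 1)$ (allowed by the lemma), subadditivity of $s \mapsto s^{p_\star}$ together with the identity $v + w \equiv \omega$ yields
\[
|Q_-| \, \omega^{p_\star} \le \iint_{Q_-} v^{p_\star} \, dx dt + \iint_{Q_-} w^{p_\star} \, dx dt,
\]
so one of the two integrals is at least $\tfrac{1}{2} |Q_-| \omega^{p_\star}$. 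Applying Lemma \ref{lem:whi} to whichever function (say $v$) realizes the larger integral, the resulting weak Harnack inequality can be rearranged to
\[
\inf_{Q(X_0, R/4)} v \ge c\, \omega - C R^2 \|f\|_\infty,
\]
which, combined with $\sup_{Q(X_0, R/4)} u \le M$, gives the oscillation decay
\[
\osc_{Q(X_0, R/4)} u \le (1 - c) \omega + C R^2 \|f\|_\infty.
\]
The case in which $w$ realizes the larger integral is symmetric and bounds the sup of $u$ from above, producing the same decay.

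Finally, I iterate: writing $R_k := 4^{-k} R$ and $\omega_k := \osc_{Q(X_0, R_k)} u$, the above yields $\omega_{k+1} \le \theta \omega_k + C R_k^2 \|f\|_\infty$ with $\theta := 1 - c < 1$. A routine induction produces $\omega_k \le C \rho^k (\omega_0 + R^2 \|f\|_\infty)$ with $\rho := \max\{\theta, 1/16\}$, and translating back to a continuous scale $0 < r \le R$ by choosing $k$ so that $R_{k+1} < r \le R_k$ and using monotonicity of $\osc$ in the cylinder yields the claim with $\alpha_0 := \log(1/\rho)/\log 4$.

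The main obstacle is a small geometric bookkeeping: ensuring that the application of Lemma \ref{lem:whi} at base point $X_0$ places both the past cylinder $Q_-$ and the inf cylinder $Q(X_0, R/4)$ inside the reference $Q(X_0, R)$, which is what lets the dichotomy localize the oscillation decay at $X_0$. Once the scale $R/4$ is chosen this is automatic, and the rest reduces to the standard recursion for oscillation-type inequalities.
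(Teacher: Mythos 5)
The paper does not itself prove this lemma; it is invoked as a ``standard consequence'' of Lemma~\ref{lem:whi}. Your argument — applying the weak Harnack inequality with scale $R/4$ to both $u-\inf u$ and $\sup u - u$, using subadditivity of $s\mapsto s^{p_\star}$ and the identity $v+w\equiv\omega$ to get the dichotomy, extracting the one-step decay $\osc_{Q(X_0,R/4)} u \le (1-c)\,\osc_{Q(X_0,R)} u + C R^2\|f\|_\infty$, and then iterating dyadically — is exactly the standard derivation being referenced, and it is correct.
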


Finally, we state a boundary H\"{o}lder estimate under the condition \eqref{eqn:CDC}, which serves as a primary tool in the subsequent section. Although the boundary regularity of parabolic equations under the capacity density condition is well-known, the formulation presented there lacks the precise quantitative form required for our argument in Lemma \ref{lem:boundary_regularity_esti}. Since this quantitative estimate is essential for the barrier construction, we provide a proof in Appendix \ref{sec:boundary_regularity},
modifying the proof in \cite{MR563383}.

\begin{lemma}\label{lem:boundary_regularity_esti}
Assume that $D \subset \R^{n}$ satisfies \eqref{eqn:CDC} for some $\gamma > 0$.
Fix $R > 0$, and let $\Xi \in \Gamma \times ((4R)^{2}, T)$.
Let $u \in V( D \times (0, T) )$ be a weak solution to
the problem $\mathcal{H} u = f$ in $D \times (0, T)$.
Then,
\[
\osc_{ Q( \Xi, r) \cap D \times (0, T) } u
\le
C_{1}
\left( \frac{r}{R} \right)^{\alpha_{B}}
\osc_{ Q( \Xi, 4R) \cap D \times (0, T) } u
+
\osc_{(\partial D) \times (0, T) \cap Q( \Xi, 4R ) } u
+
C_{2} k(4R)
\]
for all $0 < r < R$,
where $k(R) = \| f \|_{L^{\infty}(Q(\xi, R)) \cap D \times (0, T))}$
and $C_{1}$, $C_{2}$ and $\alpha_{B}$ are positive constant depending only on $n$, $L$ and $\gamma$.
\end{lemma}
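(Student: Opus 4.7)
\emph{Strategy.} The plan is to establish a one-step dyadic oscillation reduction
\[
\omega(R) \le \theta \, \omega(4R) + \omega^{*}(4R) + C R^{2} \| f \|_{L^{\infty}},
\]
for some universal $\theta = \theta(n, L, \gamma) \in (0, 1)$, where $\omega(\rho)$ and $\omega^{*}(\rho)$ denote the oscillations of $u$ over $Q(\Xi, \rho) \cap D \times (0, T)$ and $(\partial D) \times (0, T) \cap Q(\Xi, \rho)$, respectively. Iterating this reduction at scales $4^{-j} R$ yields the claimed H\"{o}lder estimate with $\alpha_{B} = - \log_{4} \theta$ via the standard geometric-decay lemma.

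\emph{Reduction via dichotomy.} If $\omega^{*}(4R) \ge \omega(4R)/4$, the conclusion is immediate, so assume otherwise. Writing $M, m, M^{*}, m^{*}$ for the corresponding suprema and infima at scale $4R$, the identity $\omega(4R) = (M - M^{*}) + \omega^{*}(4R) + (m^{*} - m)$ forces at least one of $M - M^{*}$ and $m^{*} - m$ to exceed $3 \omega(4R)/8$. Passing to $-u$ if needed, assume $k := M - M^{*} \ge 3 \omega(4R)/8$, and set $v := M - u$, a nonnegative weak supersolution of $\mathcal{H} v = -f$ on $Q(\Xi, 4R) \cap D \times (0, T)$ whose trace on $(\partial D) \times (0, T) \cap Q(\Xi, 4R)$ is at least $k$.

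\emph{Extension and weak Harnack.} Define $\tilde v := \min\{v, k\}$ on $D \times (0, T)$ and $\tilde v \equiv k$ elsewhere in $Q(\Xi, 4R)$. Lemma \ref{lem:gluing} applied to $v$ and the constant $k$ shows $\min\{v, k\}$ is a supersolution on $D \times (0, T)$, and a Steklov-average argument exploiting the constancy of $\tilde v$ outside $D$ extends this to $\mathcal{H} \tilde v \ge - \| f \|_{L^{\infty}}$ on the whole of $Q(\Xi, 4R)$. Since $\tau > (4R)^{2}$, the cylinder $Q(\Xi, 4R)$ sits inside $\R^{n} \times (0, T)$, so Lemma \ref{lem:whi} applies to $\tilde v$ at $X_{0} = \Xi$ with parabolic radius $R$ and some fixed $p_{\star} \in (0, (n + 2)/n)$, yielding
\[
\left( \frac{1}{R^{n + 2}} \iint_{Q_{-}} \tilde v^{p_{\star}} \, dx dt \right)^{1/p_{\star}} \le C \left( \inf_{Q(\Xi, R)} \tilde v + R^{2} \| f \|_{L^{\infty}} \right).
\]
Since $\tilde v \equiv k$ on $Q_{-} \setminus (D \times (0, T))$ and the CDC implies the volume bound $|B(\xi, 3R) \setminus D| \ge c(n, \gamma) R^{n}$ (via the isocapacitary inequality, with a logarithmic variant in dimension two), the left-hand side is bounded below by $c_{1}(n, \gamma) \, k$. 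In the regime $M(R) > M^{*}$, the right-hand side is at most $C (M - M(R)) + C R^{2} \| f \|_{L^{\infty}}$; the complementary regime $M(R) \le M^{*}$ already gives $\omega(R) \le \omega(4R) - k \le (5/8) \omega(4R)$ directly. Rearranging and invoking $k \ge 3 \omega(4R)/8$ completes the one-step reduction.

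\emph{Main obstacle.} The most delicate step is verifying that $\tilde v$ is a weak supersolution across $(\partial D) \times (0, T)$, since test functions in $C_{c}^{\infty}(Q(\Xi, 4R))$ need not vanish there; this is handled via careful Steklov approximation, using that the spatial gradient of $\tilde v$ vanishes on the exterior. The CDC-to-volume passage is elementary for $n \ge 3$ and requires the logarithmic isocapacitary inequality for $n = 2$, but yields a constant depending only on $n$ and $\gamma$ in either case. Once these are in place, the iteration step is routine.
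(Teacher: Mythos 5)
Your overall architecture (one-step oscillation reduction, truncated extension of $v = M - u$ across $\partial D$, weak Harnack on the extension) is the right family of ideas and matches the spirit of the paper's Step 1. However, there is a fatal gap in the way you extract quantitative information from the capacity density condition.

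You claim that the CDC implies the volume lower bound $|B(\xi, 3R) \setminus D| \ge c(n,\gamma) R^{n}$ ``via the isocapacitary inequality.'' This is false, and the direction of the isocapacitary inequality is exactly the opposite of what you need. For $n \ge 3$ the isocapacitary inequality reads $\capacity(K, B_{2R}) \ge c(n)\,|K|^{(n-2)/n}$, equivalently $|K| \le C(n)\,\capacity(K, B_{2R})^{n/(n-2)}$: a set of small volume has small capacity, but a set of \emph{large} capacity may have arbitrarily small (even zero) volume. For instance, $D = B(0,1) \setminus \{x_{n} = 0\}$ satisfies \eqref{eqn:CDC} at every point of the slit, yet $|\overline{B(\xi,R)} \setminus D| = 0$ there; the same happens for a slit disk in $\R^{2}$. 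Thus the lower bound on $\bigl( R^{-(n+2)}\iint_{Q_{-}} \tilde v^{p_{\star}} \bigr)^{1/p_{\star}}$ that you derive from $\tilde v \equiv k$ on $Q_{-} \setminus (D \times (0,T))$ collapses on precisely the domains for which the capacity density condition is interesting. The entire point of working with \eqref{eqn:CDC} rather than a volume density condition is to cover such thin complements, so the step cannot be repaired by merely tightening constants.

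The paper's proof avoids this by using the capacity definition \eqref{eqn:variational_capacity} directly rather than passing through volume. With $u_{m}^{-}$ the truncation extended by $m$ across $\partial D$ (so that $u_{m}^{-} = m$ on $\overline{B(\xi,R)} \setminus D$), the function $u_{m}^{-}(\cdot,t)\eta/m$ is an admissible competitor for $\capacity(\overline{B(\xi,R)} \setminus D, B(\xi,2R))$, which yields the pointwise-in-time inequality $m^{2}\gamma R^{n-2} \le C \int_{B(\xi,2R)} |\nabla(u_{m}^{-}(\cdot,t)\eta)|^{2}\,dx$. Integrating in $t$ converts the CDC into a lower bound on $m$ in terms of the \emph{energy} $\iint_{Q_{1}} |\nabla u_{m}^{-}|^{2}$ (plus an $L^{2}$ term). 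That energy is then controlled by a Caccioppoli-type estimate using the test function $(m - u_{m}^{-})_{h}\eta^{2}$ together with the negative-power test function $[v^{p}]_{h}\eta^{2}$ for a fixed $-2/n < p < 0$, and H\"{o}lder's inequality splits $\iint |\nabla v|$ into $L^{1+p}$ and $L^{1-p}$ norms of $v$, both of which fall inside the range $(0,(n+2)/n)$ where Lemma~\ref{lem:whi} applies. If you want to salvage your route, you must replace the volume step by this capacitary energy inequality; as written the argument does not prove the stated lemma under \eqref{eqn:CDC}.

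A smaller remark: your dichotomy identity $\omega(4R) = (M - M^{*}) + \omega^{*}(4R) + (m^{*} - m)$ presupposes $m \le m^{*} \le M^{*} \le M$, which is not automatic for the Sobolev boundary extrema of \eqref{eqn:sobolev_bv}; the paper sidesteps this by applying the Step~1 estimate separately to $M(4R) - u$ and $u - m(4R)$ and then summing. This is easy to fix, but worth noting.
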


\section{Construction of barrier}\label{sec:barrier}

The main result in this section is Theorem \ref{thm:main}.
We divide the proof of it into three parts (Lemmas \ref{lem:barrier1} and \ref{lem:barrier2} and the proof of Theorem \ref{thm:main}).
The necessity of \eqref{eqn:CDC} is discussed in Theorem \ref{thm:necessity}.
For simplicity, we assume that $T = \infty$ in this section.

\begin{theorem}\label{thm:main}
Assume that $D$ satisfies \eqref{eqn:CDC}.
Then, there exists $s_{\Gamma} \in V_{\loc}(D \times \R_{+} ) \cap C( D \times \R_{+} )$ such that
\begin{equation}\label{eqn:barrier1}
\mathcal{H} s_{\Gamma} \ge c_{H} \frac{ s_{\Gamma} }{ \delta_{\Gamma}^{2} } \quad \text{in} \ D \times \R_{+},
\end{equation}
\begin{equation}\label{eqn:barrier2}
\delta_{\Gamma}(X)^{\alpha_{H}}
\le
s_{\Gamma}(X)
\le
15 \delta_{\Gamma}(X)^{\alpha_{H}}
\end{equation}
for all $X \in D \times \R_{+}$.
Here, $c_{H}$ and $\alpha_{H}$ are constants depending only on $n$, $L$ and $\gamma$.
\end{theorem}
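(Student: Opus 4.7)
The overall plan is to adapt Ancona's gluing construction from the elliptic setting of \cite{MR856511} to parabolic equations. The strategy is to produce a countable family of scale-localized caloric supersolutions and then paste them together into $s_{\Gamma}$ via Lemma~\ref{lem:gluing}. I would carry this out in three steps, matching the three-part split announced before the statement.

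First, for each dyadic radius $R_{k} = 2^{k}$ and each boundary point $\xi \in \Gamma$, I would solve an auxiliary Dirichlet problem on the parabolic cylinder $Q(\Xi_{k}, 4R_{k}) \cap (D \times \R_{+})$ with boundary data chosen so that the resulting caloric function $w_{k}$ vanishes on the lateral portion meeting $\Gamma$ and equals $1$ on the remainder of the parabolic boundary. The weak-solution framework together with Lemma~\ref{lem:energy_estimate} produces $w_{k} \in V_{\loc} \cap C$ with $0 \le w_{k} \le 1$. Applying the quantitative boundary H\"older estimate of Lemma~\ref{lem:boundary_regularity_esti} with $f = 0$ yields a contraction
\[
\osc_{Q(\Xi_{k}, R_{k}) \cap (D \times \R_{+})} w_{k} \le C \bigl(1/2\bigr)^{\alpha_{B}} =: \theta < 1,
\]
where the crucial use of the CDC enters through the constants of Lemma~\ref{lem:boundary_regularity_esti}. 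Iterating this contraction across concentric dyadic shells produces a decay $w_{k}(X) \lesssim (\delta_{\Gamma}(X)/R_{k})^{\alpha_{H}}$ with the explicit exponent $\alpha_{H} := \log(1/\theta)/\log 2 > 0$.

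Second, I would normalize $\widetilde{w}_{k} := R_{k}^{\alpha_{H}} w_{k}$, extend each $\widetilde{w}_{k}$ by a sufficiently large constant outside its cylinder, and set
\[
s_{\Gamma}(X) := \min_{k \in \Z} \widetilde{w}_{k}(X).
\]
Lemma~\ref{lem:gluing} transfers the supersolution property to this minimum in $V_{\loc}$. The two-sided bound \eqref{eqn:barrier2} then follows from geometric-series bookkeeping: on the shell where $\delta_{\Gamma} \sim R_{k}$ essentially one scale dominates, so $s_{\Gamma}(X) \sim R_{k}^{\alpha_{H}} \sim \delta_{\Gamma}^{\alpha_{H}}$, and the explicit constant $15$ accumulates from summing contributions of neighboring scales. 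To upgrade $\mathcal{H} s_{\Gamma} \ge 0$ to the singular inequality $\mathcal{H} s_{\Gamma} \ge c_{H} s_{\Gamma} / \delta_{\Gamma}^{2}$, I would replace each $w_{k}$ by its time-exponential modification $e^{c_{H} t / R_{k}^{2}} w_{k}$; a direct computation gives $\mathcal{H}(e^{c_{H} t / R_{k}^{2}} w_{k}) \ge c_{H} R_{k}^{-2} e^{c_{H} t / R_{k}^{2}} w_{k}$, and since $\delta_{\Gamma}(X) \sim R_{k}$ wherever the minimum is attained, the factor $R_{k}^{-2}$ is comparable to $\delta_{\Gamma}(X)^{-2}$.

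The main obstacle I anticipate lies in the gluing step: showing that the countable infimum belongs to $V_{\loc}(D \times \R_{+})$, verifies the parabolic supersolution inequality with the \emph{matching} singular right-hand side in the interior, and achieves the explicit constant $15$. This requires (i) choosing dyadic cylinders whose parabolic overlaps are controlled, (ii) checking that the weak formulation passes to minima (this is where Lemma~\ref{lem:gluing} is applied locally), and (iii) handling the parabolic nature of $\delta_{\Gamma}$, which also encodes distance to the initial slice $D \times \{0\}$, so that cylinders adapted to $t = 0$ must be added to those anchored at lateral boundary points. Once these technical points are handled, the construction reduces to a faithful parabolic analogue of Ancona's elliptic argument.
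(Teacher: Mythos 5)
Your proposal captures the Ancona spirit and the role of Lemma~\ref{lem:boundary_regularity_esti}, but three essential design choices in the paper's actual construction are missed, and each is load-bearing.

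First, the auxiliary functions. You take $w_k$ solving $\mathcal{H} w_k = 0$ with $w_k = 0$ on the $\Gamma$-part of the parabolic boundary. The boundary H\"older estimate then gives only an \emph{upper} decay bound $w_k \lesssim (\delta_\Gamma / R_k)^{\alpha}$, so $\min_k \widetilde{w}_k \lesssim \delta_\Gamma^{\alpha_H}$. But \eqref{eqn:barrier2} also demands the matching \emph{lower} bound $\delta_\Gamma^{\alpha_H} \le s_\Gamma$, and nothing in your construction forces the minimizing $\widetilde{w}_k$ to stay above $\delta_\Gamma^{\alpha_H}$ --- a lower decay bound for a caloric function vanishing on the boundary would amount to a boundary Harnack inequality, which is not available here and is not needed in the paper. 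The paper sidesteps this entirely by designing $u_B$ (Lemma~\ref{lem:barrier1}) with \emph{nonzero} inner boundary value $1/4$ and forcing $1/4 \le u_B \le 5/4$. Once $v_k \ge 1/4$ for every $k$, the lower bound is a one-line consequence of the weights $(1/3)^k$. The upper bound with the explicit factor $15$ then comes not from ``geometric-series bookkeeping'' over all scales but from the observation that on $E_k \setminus E_{k+1}$ the infimum is a \emph{minimum of just two scales} $k-1$ and $k$; this is also what makes the $V_{\loc}$ claim work (Lemma~\ref{lem:gluing} only handles a finite minimum, and your countable infimum needs this local two-scale reduction that your proposal omits).

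Second, the singular right-hand side. Your proposal multiplies each $w_k$ by $e^{c_H t / R_k^2}$ to turn $\mathcal{H} w_k = 0$ into $\mathcal{H}(e^{c_H t/R_k^2}w_k) = c_H R_k^{-2} e^{c_H t/R_k^2} w_k$. On $D \times (0,\infty)$ this factor is unbounded, which destroys \eqref{eqn:barrier2}; even localizing to finite-time cylinders, the pieces would no longer patch as required. The paper avoids time-dependent weights altogether: $u_B$ solves $\mathcal{H} u_B = c R^{-2}$ (stationary forcing), so the supersolution inequality is built in. Since $u_B$ is comparable to a constant and the construction ensures $\delta_\Gamma \sim R$ on the relevant shell, $c R^{-2} \gtrsim c_H\, s_\Gamma / \delta_\Gamma^2$ follows directly, with no exponential and no growth in $t$.

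Third, the initial time slice. You flag $D \times \{0\}$ as an obstacle but do not resolve it. The paper handles it by defining $v_k$ as the minimum of the glued spatial pieces with the ramp $\frac{1}{4} + \frac{3}{4}(2/\theta)^{2k} t$, which is an explicit supersolution that cuts off the strip $0 < t \le (1/12)(\theta/2)^{2k}$ and supplies property \eqref{eqn:barrier2_4} there. Without such a cut-off, your construction does not control $s_\Gamma$ near $t = 0$ and the exponent in $\delta_\Gamma$ would not match across the parabolic boundary.
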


We construct the following auxiliary functions using Lemma \ref{lem:boundary_regularity_esti}.

\begin{lemma}\label{lem:barrier1}
Assume that \eqref{eqn:CDC} holds.
Let $B \subset \R^{n}$ be a ball centered at $\xi \in \Gamma$ with radius $R$.
Then, there exists a function $u_{B} \in C( D \times \R_{+} )$ such that
\begin{equation}\label{eqn:barrier1_1}
\frac{1}{4} \le u_{B} \le \frac{5}{4} \quad \text{in} \ (D \cap B) \times \R_{+},
\end{equation}
\begin{equation}\label{eqn:barrier1_2}
u_{B} = 1 \quad \text{on} \ (D \setminus B) \times \R_{+},
\end{equation}
\begin{equation}\label{eqn:barrier1_3}
\mathcal{H} u_{B}
= c R^{-2}
\quad \text{in} \ (D \cap B) \times \R_{+},
\end{equation}
\begin{equation}\label{eqn:barrier1_4}
u_{B} \le \frac{5}{16} \quad \text{on} \ \left( D \cap \theta B \right) \times \left(\frac{1}{12} R^{2}, \infty \right).
\end{equation}
Here, $\theta$ and $c$ are positive constants depending only on $n$, $L$ and $\gamma$.
\end{lemma}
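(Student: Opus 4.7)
I will produce $u_B$ as the affine combination $u_B := \tfrac{3}{4}\phi + \tfrac{1}{4} + v$ on $(D \cap B) \times \R_+$, extended by $1$ on $(D \setminus B) \times \R_+$, where $\phi$ solves the homogeneous equation and vanishes near $\xi$, while $v$ absorbs the right-hand side. Take $\phi \in V_\loc((D \cap B) \times \R_+)$ to be the weak solution of the Cauchy--Dirichlet problem $\mathcal{H}\phi = 0$ with $\phi = 1$ on $(D \cap B) \times \{0\} \cup (D \cap \partial B) \times \R_+$ and $\phi = 0$ on $(\partial D \cap B) \times \R_+$; existence is standard, and Lemma \ref{lem:global_boundedness} gives $0 \le \phi \le 1$. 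Take $v \in V_\loc$ to be the weak solution of $\mathcal{H} v = cR^{-2}$ in $(D \cap B) \times \R_+$ with vanishing initial and lateral data; Lemma \ref{lem:global_boundedness} on $D \cap B$ (of diameter at most $2R$) gives $0 \le v \le C_0 c$ for some $C_0 = C_0(n, L)$. The gluing across $(D \cap \partial B) \times \R_+$ is continuous because $\phi = 1$ and $v = 0$ there, and interior continuity follows from Lemma \ref{lem:interior_hoelder_esti}; hence $u_B \in C(D \times \R_+)$. Properties (\ref{eqn:barrier1_2}) and (\ref{eqn:barrier1_3}) are immediate, and (\ref{eqn:barrier1_1}) is arithmetic: $u_B \ge \tfrac{1}{4}$ because $\phi, v \ge 0$, and $u_B \le \tfrac{3}{4} + \tfrac{1}{4} + C_0 c \le \tfrac{5}{4}$ once $c$ is small.

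The key step is (\ref{eqn:barrier1_4}). Set $R_\star := R/(8\sqrt{3})$, so that $(4R_\star)^2 = R^2/12$. For each $\tau > R^2/12$, apply Lemma \ref{lem:boundary_regularity_esti} to $\phi$ at $\Xi = (\xi, \tau)$ with scale $R_\star$: the source term vanishes since $\mathcal{H}\phi = 0$, and the boundary-oscillation term vanishes because $(\partial D) \times \R_+ \cap Q(\Xi, 4R_\star) \subset (\partial D \cap B) \times \R_+$, on which $\phi \equiv 0$. With $\osc \phi \le 1$, this yields
\[
\osc_{Q(\Xi, r) \cap D \times \R_+} \phi \le C_1 (8\sqrt{3}\,r/R)^{\alpha_B}, \qquad 0 < r < R_\star,
\]
and the continuous extension $\phi(\xi, \tau) = 0$ (furnished by the same lemma) together with $\phi \ge 0$ upgrades this to the pointwise bound $\phi \le C_1 (8\sqrt{3}\,r/R)^{\alpha_B}$ on $Q(\Xi, r) \cap D \times \R_+$. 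Choose $\theta \in (0, 1/(8\sqrt{3}))$ so that $C_1 (8\sqrt{3}\,\theta)^{\alpha_B} \le 1/24$, and then fix $c$ with $C_0 c \le 1/32$. For $x \in D \cap \theta B$ and $t > R^2/12$, setting $\tau := t + (\theta R)^2/2$ puts $(x, t) \in Q((\xi, \tau), \theta R)$, giving $\phi(x, t) \le 1/24$ and $v(x, t) \le 1/32$, and hence
\[
u_B(x, t) \le \tfrac{3}{4}\cdot \tfrac{1}{24} + \tfrac{1}{4} + \tfrac{1}{32} = \tfrac{5}{16}.
\]

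The main technical obstacle is the simultaneous calibration of scales: $R_\star$ must be small enough that Lemma \ref{lem:boundary_regularity_esti}'s time hypothesis $(4R_\star)^2 < \tau$ holds at the threshold $\tau = R^2/12$ and that $4R_\star < R$ keeps the active boundary segment inside $B$ (where $\phi = 0$), while $R_\star$ must not be so small that the Hölder factor $(\theta R/R_\star)^{\alpha_B}$ cannot be pushed below the tolerance $1/24$ by a quantitatively chosen $\theta$. Once $R_\star$ is fixed by the equality $(4R_\star)^2 = R^2/12$, the choice of $\theta$ follows from the quantitative Hölder exponent, and $c$ is then chosen so that $v$ stays uniformly below $1/32$; all constants depend only on $n$, $L$, and $\gamma$.
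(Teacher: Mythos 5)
Your proof is correct and, at the level of the tools used, follows the same strategy as the paper: solve a Cauchy--Dirichlet problem for $\mathcal{H}u = cR^{-2}$ on $(D \cap B) \times \R_{+}$ with lateral data equal to $1$ on $D \cap \partial B$, extend by $1$ outside $B$, derive \eqref{eqn:barrier1_1} from Lemma \ref{lem:global_boundedness} plus comparison, and derive \eqref{eqn:barrier1_4} from Lemma \ref{lem:boundary_regularity_esti} with the radius $R_\star$ calibrated so that $(4R_\star)^2 = R^2/12$. Where you genuinely diverge is the construction itself: the paper uses a single solution with a smooth radial cutoff $\eta$ (equal to $1/4$ on $\cl{B}/2$ and $1$ on $\partial B$) as parabolic boundary data, whereas you decompose $u_B = \tfrac{3}{4}\phi + \tfrac{1}{4} + v$ with $\phi$ solving the homogeneous equation with step (hence incompatible) boundary data and $v$ absorbing the right-hand side. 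Both work; your decomposition isolates where each lemma is used and makes the constant-chasing transparent, at the modest cost of working with discontinuous boundary data on the corner $(\partial D \cap B) \times \{0\}$ (harmless because only continuity on the open set $D \times \R_+$ is claimed) and requiring that the application of Lemma \ref{lem:boundary_regularity_esti} be explicitly on the domain $D \cap B$, for which the CDC at $\xi$ and scales $\le 4R_\star < R$ is inherited from the CDC of $D$.

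One citation should be corrected: the continuity of $\phi$ and $v$ up to $(D \cap \partial B) \times \R_+$, which is what makes the gluing with the constant extension continuous, is a \emph{boundary} regularity fact and cannot come from the interior estimate Lemma \ref{lem:interior_hoelder_esti}. It should instead be obtained from Lemma \ref{lem:boundary_regularity_esti} applied at points of $(D \cap \partial B) \times \R_+$, where the CDC holds because $\R^n \setminus B$ is uniformly thick near $\partial B$; this is exactly how the paper argues. Also, a brief word justifying the passage from the oscillation bound to the pointwise bound on $\phi$ (i.e.\ that $\phi$ extends continuously to $(\xi,\tau)$ with value $0$, which uses the zero Sobolev trace on $\partial D \cap B$ together with the oscillation decay) would make that step airtight; the paper relies on the same fact implicitly.
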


\begin{proof}
Take a smooth function $\eta$ on $B = B(\xi, R)$ such that
$\eta = 1$ on $\partial B$ and $\eta = 1 / 4$ on $\cl{B} / 2$.
Let $u$ be the solution to the Cauchy-Dirichlet problem
\begin{align*}
\mathcal{H} u = c R^{-2} \quad & \text{in} \ (D \cap B) \times \R_{+},
\\
u = \eta \quad & \text{on} \ \partial_{p}( (D \cap B) \times \R_{+} ).
\end{align*}
We extend $u$ to $D \times \R_{+}$ by $u(X) = 1$ for $X \in (D \setminus B) \times \R_{+}$.

By definition, $u_{B}$ satisfies \eqref{eqn:barrier1_2} and \eqref{eqn:barrier1_3}.
By Lemma \ref{lem:global_boundedness} and the comparison principle, \eqref{eqn:barrier1_1} holds for sufficiently small $c$.
Then, by Lemma \ref{lem:boundary_regularity_esti},
\eqref{eqn:barrier1_4} is satisfied by taking $c$ even smaller and choosing $\theta$ sufficiently small as well.
By Lemma \ref{lem:boundary_regularity_esti}, $u$ is continuous at for any $X \in (D \cap \partial B) \times \R_{+}$.
Therefore, $u_{B} \in C(D \times \R_{+} )$.
\end{proof}

Using a positive constant $\theta$ in Lemma \ref{lem:barrier1},
we define a sequence of subsets of $D \times \R_{+}$.
For $k \in \Z$, we set
\[
E_{k}
:=
\left\{
X \in D \times \R_{+} \colon \delta_{\Gamma}(X) \le \left( \frac{\theta}{2} \right)^{k}
\right\}.
\]
Take a countable subset $\Gamma_{k} \subset \Gamma$ such that
\begin{equation}\label{eqn:def_xi_k}
\left\{ x \in D \colon d_{\R^{n}}(x, \Gamma) \le \left( \frac{\theta}{2} \right)^{k + 1} \right\}
\subset
\bigcup_{ \xi \in \Gamma_{k} } B\left(\xi, 2 \left( \frac{\theta}{2} \right)^{k + 1} \right).
\end{equation}
Using this $\Gamma_{k}$, we define $D_{k} \subset D$ by
\[
D_{k}
:=
D \cap
\bigcup_{ \xi \in \Gamma_{k} } B\left(\xi, \left( \frac{\theta}{2} \right)^{k} \right).
\]
Finally, we define $U_{k} \subset D \times \R_{+}$ by
\begin{equation}\label{eqn:def_u_k}
U_{k} = D_{k} \times \R_{+} \cup D \times \left( 0,  \left( \frac{\theta}{2} \right)^{2k} \right).
\end{equation}
By definition, $E_{k + 1} \subset U_{k} \subset E_{k}$.

\begin{lemma}\label{lem:barrier2}
Assume that Lemma \ref{lem:barrier1} holds.
Then, there exists $v_{k} \in V_{\loc}( D \times \R_{+} ) \cap C( D \times \R_{+} )$ such that
\begin{equation}\label{eqn:barrier2_1}
\frac{1}{4} \le v_{k} \le \frac{5}{4} \quad \text{in} \ D \times \R_{+},
\end{equation}
\begin{equation}\label{eqn:barrier2_2}
v_{k} = 1 \quad \text{on} \ (D \times \R_{+}) \setminus U_{k},
\end{equation}
\begin{equation}\label{eqn:barrier2_3}
\mathcal{H} v_{k} \ge c \left( \frac{2}{\theta} \right)^{2 k} \quad \text{in} \ U_{k},
\end{equation}
\begin{equation}\label{eqn:barrier2_4}
v_{k} \le \frac{5}{16} \quad \text{in} \ E_{k + 1},
\end{equation}
where, $c$ is a small positive constant depending only on $n$, $L$ and $\gamma$.
\end{lemma}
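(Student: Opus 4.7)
The plan is to realize $v_k$ as the pointwise minimum of the local spatial barriers from Lemma \ref{lem:barrier1} and a simple linear-in-time supersolution that handles the initial strip $D \times (0, R_k^2)$. Setting $R_k := (\theta/2)^k$, let $u_\xi$ denote the function given by Lemma \ref{lem:barrier1} with $B = B(\xi, R_k)$, and introduce
\[
w(t) := \frac{1}{4} + \frac{3}{4}\left(\frac{2}{\theta}\right)^{2k} t,
\]
a classical solution of $\mathcal{H} w \equiv \tfrac{3}{4}(2/\theta)^{2k}$ satisfying $w(0) = 1/4$, $w(R_k^2/12) = 5/16$ and $w(R_k^2) = 1$, once $\theta$ has been shrunk (in Lemma \ref{lem:barrier1}) so that $(\theta/2)^2 \le 1/12$. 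Then set
\[
v_k(x, t) := \min\bigl\{\inf_{\xi \in \Gamma_k} u_\xi(x, t),\; w(t)\bigr\}.
\]
Continuity of $v_k$ on $D \times \R_+$ and membership in $V_{\loc}$ are inherited from the corresponding properties of the $u_\xi$ and of $w$, together with local finiteness of the family $\{B(\xi, R_k)\}_{\xi \in \Gamma_k}$.

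Properties \eqref{eqn:barrier2_1}, \eqref{eqn:barrier2_2} and \eqref{eqn:barrier2_4} then follow directly. For \eqref{eqn:barrier2_1}, the pointwise bounds $1/4 \le u_\xi \le 5/4$ and $w \ge 1/4$ give $1/4 \le v_k \le 5/4$. For \eqref{eqn:barrier2_2}, a point off $U_k$ has $x \notin D_k$, so every $u_\xi(x, t) = 1$ by \eqref{eqn:barrier1_2}, and $t \ge R_k^2$, so $w(t) \ge 1$; hence $v_k = 1$. For \eqref{eqn:barrier2_4}, a point $(x, t) \in E_{k+1}$ satisfies either $t \le (\theta/2)^{2(k+1)} \le R_k^2/12$, in which case $v_k(x, t) \le w(t) \le 5/16$, or $d_{\R^n}(x, \Gamma) \le (\theta/2)^{k+1}$, in which case \eqref{eqn:def_xi_k} supplies some $\xi \in \Gamma_k$ with $x \in \theta B(\xi, R_k)$, and combining \eqref{eqn:barrier1_4} (for $t > R_k^2/12$) with the $w$-bound (for $t \le R_k^2/12$) yields $v_k(x, t) \le 5/16$.

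The supersolution inequality \eqref{eqn:barrier2_3} is the delicate step, and is obtained by applying Lemma \ref{lem:gluing} locally. On a compact subset of $U_k$ only finitely many $u_\xi$ contribute to the infimum (by local finiteness of the cover); each such $u_\xi$ is a supersolution of $\mathcal{H} u = c'(2/\theta)^{2k}$ in $B(\xi, R_k)$, with $c'$ the constant from Lemma \ref{lem:barrier1}, and $w$ is a supersolution of $\mathcal{H} u = \tfrac{3}{4}(2/\theta)^{2k}$ everywhere. The main technical obstacle is that any individual $u_\xi$ is a supersolution of the target equation only inside its own ball $B(\xi, R_k)$: outside that ball $u_\xi$ equals the constant $1$ and only satisfies $\mathcal{H} u_\xi = 0$, so one must argue that at every point of $U_k$ the active branches of the minimum genuinely carry the source $c'(2/\theta)^{2k}$. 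On the initial strip $D \times (0, R_k^2) \subset U_k$ this is ensured by $w$ itself (since $w < 1$ there forces the $w$-branch to be active), whereas on the complementary region $D_k \times [R_k^2, \infty)$ one combines the covering property \eqref{eqn:def_xi_k} with the spatial decay \eqref{eqn:barrier1_4} to locate near every relevant point a $\xi \in \Gamma_k$ for which $u_\xi < 1$. Taking $c := \min(c', 3/4)$ then delivers \eqref{eqn:barrier2_3}.
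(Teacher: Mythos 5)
Your construction of $v_k$ is identical to the paper's, and your verifications of \eqref{eqn:barrier2_1}, \eqref{eqn:barrier2_2}, and \eqref{eqn:barrier2_4} coincide with the paper's; the explicit normalization $(\theta/2)^2 \le 1/12$ that you impose is left implicit in the paper but is harmless (since $\theta$ may be shrunk in Lemma \ref{lem:barrier1}) and is a welcome clarification.

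For \eqref{eqn:barrier2_3}, you correctly isolate a subtlety that the paper dispatches with the single sentence ``$v_k$ is the minimum of a finite collection of supersolutions'': each $u_{B(\xi, (\theta/2)^k)}$ is a supersolution of the target equation only inside $(D \cap B(\xi, (\theta/2)^k)) \times \R_+$, while outside it is the constant $1$ and only satisfies $\mathcal{H}u = 0$. Your resolution is, however, incomplete on the region $D_k \times [(\theta/2)^{2k}, \infty)$. There you invoke \eqref{eqn:def_xi_k} together with \eqref{eqn:barrier1_4} to find, near each point, some $\xi \in \Gamma_k$ with $u_{B(\xi, (\theta/2)^k)} < 1$. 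But \eqref{eqn:def_xi_k} covers only the set $\{x \in D : d_{\R^n}(x, \Gamma) \le (\theta/2)^{k+1}\}$ by the small balls $B(\xi, 2(\theta/2)^{k+1}) = \theta B(\xi, (\theta/2)^k)$, whereas $D_k$ is built from the \emph{full} balls $B(\xi, (\theta/2)^k)$ and so contains points with $d_{\R^n}(x, \Gamma)$ as large as $(\theta/2)^k$. At such a point, no $\xi$ with $x \in \theta B(\xi, (\theta/2)^k)$ is supplied, and since $u_B$ may exceed $1$ by \eqref{eqn:barrier1_1}, the constant-$1$ branch could be the one realizing the minimum on an open set there, in which case $\mathcal{H}v_k = 0$ locally and the claimed inequality fails. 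So ``near every relevant point'' is precisely the assertion that needs proof and is not delivered by the tools you cite.

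This gap is shared with the paper's own one-line justification, so you are faithfully reproducing the intended argument; the cleanest repair is to take $D_k := D \cap \bigcup_{\xi \in \Gamma_k} \theta B(\xi, (\theta/2)^k)$ rather than using the full balls. With that change the inclusions $E_{k+1} \subset U_k \subset E_k$ persist, every $x \in D_k$ lies in some $\theta B(\xi, (\theta/2)^k)$, and \eqref{eqn:barrier1_4} then forces $u_{B(\xi, (\theta/2)^k)} < 1$ there for $t > (\theta/2)^{2k}/12$, so the constant-$1$ branch is never active on $U_k$ and Lemma \ref{lem:gluing} applies locally to a finite family of genuine supersolutions. You should either adopt this modification or supply an independent argument that $\{v_k = 1\}$ has empty interior in $U_k$.
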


\begin{proof}
We define $v_{k}(x, t)$ by
\[
\tilde{v}_{k}(x, t)
:=
\inf_{ \xi \in \Gamma_{k} } v_{ B(\xi, (\theta / 2)^{k}) }(x, t)
\]
and
\[
v_{k}(x, t)
=
\min \left\{ \tilde{v}_{k}(x, t), \  \frac{1}{4} + \frac{3}{4} \left( \frac{2}{\theta} \right)^{2 k} t \right\}.
\]

Let us prove that this $v_{k}$ satisfies the desired properties.
For each $X \in D \times \R_{+}$, $v_{k}(X)$ is defined as the minimum of a finite number of $V_{\loc}( D \times \R_{+} ) \cap C( D \times \R_{+} )$-functions.
Therefore, $v_{k} \in V_{\loc}( D \times \R_{+} ) \cap C( D \times \R_{+} )$.
The inequalities \eqref{eqn:barrier2_1} and \eqref{eqn:barrier2_2} follow from \eqref{eqn:barrier1_1} and \eqref{eqn:barrier1_2} immediately.
The differential inequality \eqref{eqn:barrier2_3} follows from Lemma \ref{lem:gluing}.
In fact, $v_{k}$ is the minimum of a finite collection of supersolutions to supersolutions to \eqref{eqn:barrier2_3}. 
Let us prove \eqref{eqn:barrier2_4}.
By \eqref{eqn:barrier1_4}, this inequality holds if
$d_{\R^{n}}(x, \Gamma) \le \left( \theta / 2 \right)^{k + 1}$
and $t > (1 / 12) \left( \theta / 2 \right)^{2 k}$.
Meanwhile, by definition, we have
\[
v_{k}(x, t)
\le
\frac{1}{4} + \frac{3}{4} \left( \frac{2}{\theta} \right)^{2 k} t
\le \frac{5}{16}
\]
for all $x\in D$ and $t \le (1 / 12) \left( \theta / 2 \right)^{2 k}$.
These results are enough to cover $E_{k + 1}$.
\end{proof}

\begin{proof}[Proof of Theorem \ref{thm:main}]
For each $k$, let $v_{k}$ be a function in Lemma \ref{lem:barrier2}.
We define a function $s$ on $D \times \R_{+}$ by
\[
s(X) := \inf_{ E_{k} \ni X } \left( \frac{1}{3} \right)^{k} v_{k}(X).
\]
Then,
\[
\left( \frac{1}{3} \right)^{k - 2} v_{k - 2}(X)
\ge
\left( \frac{1}{3} \right)^{k - 2} \frac{1}{4}
\ge
\left( \frac{1}{3} \right)^{k} \frac{5}{4}
\ge
\left( \frac{1}{3} \right)^{k} v_{k}(X)
\]
for any $X \in E_{k}$.
Therefore, we have
\[
s(X)
=
\min \left\{
\left( \frac{1}{3} \right)^{k - 1} v_{k - 1}(X), \
\left( \frac{1}{3} \right)^{k} v_{k}(X)
\right\}
\]
for all $X \in E_{k} \setminus E_{k + 1}$.
Hence, $s \in V_{\loc}(D \times \R_{+}) \cap C(D \times \R_{+})$.

We claim that
\begin{equation}\label{eqn:main1_1}
\mathcal{H} s \ge c \left( \frac{1}{3} \right)^{k} \left( \frac{2}{ \theta } \right)^{2 (k - 1)}
\end{equation}
in an open neighborhood of $E_{k} \setminus E_{k + 1}$.
By definition, $v_{k}$ and $v_{k+ 1}$ satisfy \eqref{eqn:main1_1} in $U_{k}$.
By Lemma \ref{lem:gluing}, the desired claim holds in $U_{k} \setminus E_{k + 1}$.
Since $v_{k} = 1$ in $(D \times I) \setminus U_{k}$, we have
\[
v_{k - 1} \le \frac{5}{16} < \frac{1}{3} = \frac{1}{3} v_{k} \quad \text{in} \ E_{k} \setminus U_{k}.
\]
By continuity of $v_{k}$ and $v_{k - 1}$, the same inequality holds in a neighborhood of $\cl{E_{k}} \setminus U_{k}$.
Hence, the desired claim holds.

Since $v_{k}$ satisfies \eqref{eqn:barrier2_1}, we have
\begin{equation}\label{main1_2}
\frac{1}{4} \left( \frac{1}{3} \right)^{k}
\le
s(X)
\le
\frac{15}{4} \left( \frac{1}{3} \right)^{k}
\end{equation}
for all $X \in E_{k} \setminus E_{k + 1}$.
Plugging the latter inequality and \eqref{eqn:main1_1}, we also get
\[
\mathcal{H} s
\ge
\frac{1}{4} \left( \frac{1}{3} \right)^{k} \left( \frac{2}{\theta} \right)^{2 (k - 1)}
\ge
c_{H} \frac{s}{ \delta_{\Gamma}^{2} }.
\]
Here, 
\[
c_{H} := \frac{4}{15} \left( \frac{\theta}{2} \right)^{4} c .
\]
The right-hand side is independent of $k$, and thus, this inequality holds in $D \times \R_{+}$.
Introducing $\alpha_{H} > 0$ such that $1 / 3 = (\theta / 2)^{ \alpha_{H} }$, we write \eqref{main1_2} as
\[
\frac{1}{4} \delta_{\Gamma}(X)^{ \alpha_{H} }
\le
s(X)
\le
\frac{15}{4} \delta_{\Gamma}(X)^{ \alpha_{H} }.
\]
These inequalities show that $s_{\Gamma} = 4s$ has the desired properties.
\end{proof}

Finally, we derive \eqref{eqn:CDC} from the statement of Theorem \ref{thm:main} and examine its sharpness.

\begin{theorem}\label{thm:necessity}
Assume the existence of $s_{\Gamma}$ in Theorem \ref{thm:main} with $A(x) = \mathrm{Id}_{\R^{n}}$.
Then, the condition \eqref{eqn:CDC} holds.
Furthermore, the constant $\gamma$ is estimate from below by $n$, $c_{H}$ and $\alpha_{H}$.
\end{theorem}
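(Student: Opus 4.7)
The plan is to extract a time-independent (elliptic) barrier from the parabolic supersolution $s_\Gamma$ by Ces\`aro time-averaging, and then invoke the known equivalence between the existence of such an Ancona-type elliptic Hardy barrier and the capacity density condition \eqref{eqn:CDC}.

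Concretely, for each $T > 0$ I would define $w_T(x) := T^{-1}\int_T^{2T} s_\Gamma(x, t)\, dt$. Since $A = \mathrm{Id}_{\R^n}$, $\mathcal{H} = \partial_t - \Delta$. Integrating \eqref{eqn:barrier1} over $t \in [T, 2T]$ and dividing by $T$ yields, weakly in $x$,
\begin{equation*}
-\Delta w_T(x) \ge \frac{c_H}{T}\int_T^{2T}\frac{s_\Gamma(x, t)}{\delta_\Gamma(x, t)^2}\, dt - \frac{s_\Gamma(x, 2T) - s_\Gamma(x, T)}{T}.
\end{equation*}
Writing $d(x) := d_{\R^n}(x, \Gamma)$, a direct check from the definition of $\delta_\Gamma$ shows that $\delta_\Gamma(x, t) = d(x)$ whenever $t \ge d(x)^2$; combined with \eqref{eqn:barrier2}, this gives, for every $x \in D$ and every $T \ge d(x)^2$,
\begin{equation*}
d(x)^{\alpha_H} \le w_T(x) \le 15\, d(x)^{\alpha_H}, \qquad -\Delta w_T(x) \ge c_H\, d(x)^{\alpha_H - 2} - \frac{14\, d(x)^{\alpha_H}}{T}.
\end{equation*}

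Next I would extract a subsequence $T_n \to \infty$ along which $w_{T_n}$ converges locally uniformly in $D$ to some $w \in C(D)$; local equicontinuity follows from standard interior regularity for Poisson equations with right-hand side locally bounded in $L^\infty$. Since the convergence is strong in $L^1_{\loc}$, the distributional inequality passes to the limit, and $w$ satisfies
\begin{equation*}
d^{\alpha_H} \le w \le 15\, d^{\alpha_H}, \qquad -\Delta w \ge c_H\, d^{\alpha_H - 2} \ge \frac{c_H}{15}\cdot \frac{w}{d^2} \quad \text{in } D.
\end{equation*}

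The existence of such an elliptic Hardy-type supersolution is the classical Ancona barrier condition, whose existence is known to be quantitatively equivalent to \eqref{eqn:CDC}: from $w$ as above one recovers a lower bound on $\gamma$ depending only on $n$, $c_H$ and $\alpha_H$; see \cite{MR856511} and the author's \cite{haraCDC}. This concludes the argument. The main technical obstacle is the passage to the subsequential limit: one must ensure that the local uniform convergence is strong enough to preserve both the weak differential inequality for $-\Delta w$ and the non-degenerate two-sided pointwise control on $w$ up to $\Gamma$. This should follow from the uniform bounds derived above combined with interior estimates applied on an exhaustion of $D$ by compact subsets.
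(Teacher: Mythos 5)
Your overall plan --- extract an elliptic Hardy-type barrier from $s_\Gamma$ and then invoke the elliptic theory --- is reasonable, but the passage to the limit $T\to\infty$ has a genuine gap, and you have already identified it as the weak point. The equicontinuity claim is not justified: writing $w_T = T^{-1}\int_T^{2T} s_\Gamma(\cdot,t)\,dt$, the differential inequality \eqref{eqn:barrier1} only gives a \emph{lower} bound for $-\Delta w_T$. There is no upper bound on $\mathcal{H}s_\Gamma$, so $-\Delta w_T$ is merely a nonnegative locally finite measure, not a function in $L^\infty_{\loc}$, and interior Schauder/Calder\'on--Zygmund estimates do not give a uniform modulus of continuity for the family $\{w_T\}$. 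What you can extract is boundedness in $H^1_{\loc}\cap L^\infty_{\loc}$, hence strong $L^2_{\loc}$ convergence along a subsequence, an a.e.\ two-sided bound $d^{\alpha_H}\le w\le 15\,d^{\alpha_H}$, and the distributional inequality in the limit; to recover a bona fide superharmonic barrier one must then pass to the lower semicontinuous regularization of $w$. Alternatively --- and more in the spirit of a quantitative statement --- you can bypass the limit entirely: since \eqref{eqn:CDC} is a local condition at scale $R$ about $\xi\in\Gamma$, on $D\cap 2B(\xi,R)$ one has $d\le 2R$, so the error term $14\,d^{\alpha_H}/T$ is at most $\tfrac12\,c_H\,d^{\alpha_H-2}$ once $T\ge 28\,(2R)^2/c_H$, and a single $w_T$ with this $T$ already serves as the elliptic barrier with halved $c_H$.

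This also differs from the paper's route in a second, non-cosmetic respect: the paper does not Ces\`aro-average $s_\Gamma$. It instead solves a clean auxiliary parabolic Cauchy--Dirichlet problem $u$ on $(D\cap 2B)\times\R_+$ with bounded compactly supported data, dominates $u$ by a multiple of $s_\Gamma$ through the parabolic comparison principle, and then lets $t\to\infty$; the Poincar\'e--Gronwall argument sends $u(\cdot,t)$ to the stationary solution $u_\infty$ in $L^2$, so the elliptic object one works with is a genuine solution, not a supersolution of uncontrolled regularity. From there the paper carries out the capacity estimate explicitly (adding the annulus potential $v$, comparing $u_\infty+v$ with the equilibrium potential $\varphi$, and using the capacity of concentric balls), rather than citing the Ancona/CDC equivalence as a black box. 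Either endgame is legitimate, but if you cite \cite{MR856511} or \cite{haraCDC} you should check that the version you cite really yields $\gamma$ depending only on $n$, $c_H$, $\alpha_H$ as the theorem asserts, since it is precisely this quantitative dependence that the theorem is about.
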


\begin{proof}

Let $B = B(\xi, R)$ be a ball centered at $\xi \in \Gamma$ with radius $R > 0$.
By choosing a geometric constant $c(n)$ sufficiently small, one can ensure the existence of a function $v$ satisfying
\begin{align*}
\laplacian v = c(n) R^{-2} \mathbf{1}_{B \setminus \cl{B} / 2}(x) \quad & \text{in} \ 2B,
\\
v \ge 1 \quad & \text{in} \ 2B \setminus \cl{B},
\\
v = 0 \quad & \text{on} \ \cl{B} / 2.
\end{align*}
Using this constant $c(n)$, we consider the weak solution $u$ to the problem
\begin{align*}
(\partial_{t} - \laplacian) u = c(n) R^{-2} \mathbf{1}_{B \setminus \cl{B} / 2}(x) \quad & \text{in} \ (D \cap 2B) \times \R_{+},
\\
u = 0 \quad & \text{on} \ \partial (D \cap 2B) \times \R_{+},
\\
u(\cdot, 0) = 0 \quad & \text{in} \ D.
\end{align*}
Since $D \cap 2B$ is bounded, the Poincar\'{e} and Gronwall inequalities implies that
$u(\cdot, t)$ converges to $u_{\infty}$ in $L^{2}(D \cap 2B)$ as $t \to \infty$, 
where $u_{\infty}$ is the weak solution to the stationary problem
\begin{align*}
- \laplacian u_{\infty} = c(n) R^{-2} \mathbf{1}_{B \setminus \cl{B} / 2}(x) \quad & \text{in} \ D \cap 2B,
\\
u_{\infty} = 0 \quad & \text{on} \ \partial (D \cap 2B).
\end{align*}
By assumption, $s_{\Gamma} \ge 0$ and
\[
(\partial_{t} - \laplacian) s_{\Gamma}
\ge
c_{H} \frac{s_{\Gamma}}{\delta_{\Gamma}^{2}}
\ge
c_{H} \delta_{\Gamma}^{\alpha_{H} - 2}
\ge
c_{H} (2R)^{\alpha_{H} - 2}
\]
in $(D \cap 2B) \times \R_{+}$.
Therefore, the comparison principle yields
\begin{equation*}\label{eqn:necessity1}
u(X) \le \frac{ 2^{2 - \alpha_{H} }}{c(n) c_{H} R^{\alpha_{H}}} s_{\Gamma}(X)
\le
\frac{15 \cdot 2^{2 - \alpha_{H} }}{c(n) c_{H} R^{\alpha_{H}}} \delta_{\Gamma}(X)^{\alpha_{H}}
\end{equation*}
for all $X \in (D \cap 2B) \times \R_{+}$.
Passing to the limit $t \to \infty$, we obtain
\[
u_{\infty}(x)
\le
\left(
\frac{15 \cdot 2^{2 - \alpha_{H}} }{c(n) c_{H}}
\right)
\left( \frac{ d_{\R^{n}}(x, \Gamma) }{R} \right)^{\alpha_{H}}.
\]
Next, let $\varphi$ be the solution to the minimizing problem
\[
\inf
\left\{
\int_{\R^{n}} |\nabla \varphi|^{2} \, dx
\colon
\varphi \in H_{0}^{1}(2B), \ \varphi \ge 1 \quad \text{on} \ \cl{B} \setminus D
\right\}.
\]
Defining $w := u_{\infty} + v$, we see that $w$ satisfies
\begin{align*}
- \laplacian w = 0 \quad & \text{in} \ D \cap 2B,
\\
w \ge 1 \quad & \text{on} \ (D \cap \partial 2B) \cup (\partial D \cap (2B \setminus \cl{B})).
\end{align*}
The comparison principle then implies
\[
1 - \varphi(x)
\le
w(x)
\le
\left(
\frac{15 \cdot 2^{2 - \alpha_{H}} }{c(n) c_{H}}
\right)
\left( \frac{ d_{\R^{n}}(x, \Gamma) }{R} \right)^{\alpha_{H}}
\]
for all $x \in  D \cap 2B$.
The right-hand side is less than $1 / 2$ provided that
if 
\[
\left( \frac{ d_{\R^{n}}(x, \Gamma) }{R} \right)
\le
\lambda
:=
\left(
\frac{c(n) c_{H}}{ 30 \cdot 2^{2 - \alpha_{H}} }
\right)^{1 / \alpha_{H}},
\]
which implies that $2 \varphi \ge 1$ on $\lambda B$.
Finally, by a direct calculation of the capacity of annuli, we obtain
\begin{align*}
4 \capacity( \cl{B} \setminus D, 2B )
& =
\int_{\R^{n}} |\nabla (2 \varphi) |^{2} \, dx
\\
& \ge
\capacity( \lambda \cl{B}, 2B )
\ge
c(n, \lambda) \capacity( \cl{B}, 2B ),
\end{align*}
where $c(n, \lambda)$ is a small constant depending only on $n$ and $\lambda$.
Thus, the desired inequality \eqref{eqn:CDC} holds.
\end{proof}

\section{Existence result}\label{sec:existence}

In this section, we prove an existence theorem of weak solution to \eqref{eqn:parabolic}-\eqref{eqn:IV}
using Theorem \ref{thm:main} and the comparison principle.

Throughout below, we assume that $D$ satisfies \eqref{eqn:CDC} with $\Gamma = \partial D$ and
that the \textit{inradius} of $D$ is finite:
\begin{equation}\label{eqn:inrad_cond}
\inrad{D}
:=
\| d_{\R^{n}}(\cdot, \partial D) \|_{L^{\infty}(D)}
<
\infty.
\end{equation}

\begin{example}
(1) If $D$ is bounded, then \eqref{eqn:inrad_cond} holds clearly.
(2) For $0< r < \sqrt{n} / 2$, $D = \R^{n} \setminus \bigcup_{\xi \in \Z^{n}} \cl{B}(\xi, r)$ is an unbounded CDC domain satisfying \eqref{eqn:inrad_cond}.
\end{example}

For notational convenience, $\delta_{\Gamma}$ and $s_{\Gamma}$ are abbreviated as $\delta$ and $s$ below.

\begin{lemma}\label{lem:transform}
Let $\theta \colon (0, \infty) \to (0, \infty)$ be a continuously differentiable non-decreasing concave function such that
\[
\Theta( \sigma ) := \int_{0}^{ \sigma } \theta( \tau) \frac{d \tau }{ \tau } < \infty
\]
for some $\sigma > 0$.
Let $s (= s_{\Gamma})$ be a function in Theorem \ref{thm:main}.
Then, $\Theta(s)$ satisfies
\[
\mathcal{H} \Theta(s) \ge c_{H} \frac{ \theta( \delta^{ \alpha_{H} } )}{ \delta^{2} } \quad \text{in} \ D \times (0, T),
\]
where $c_{H}$ and $\alpha_{H}$ are the constants in Theorem \ref{thm:main}.
\end{lemma}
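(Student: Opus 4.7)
The plan is to establish the inequality by a parabolic chain rule applied to $\Theta(s)$, which formally gives
\[
\mathcal{H}\Theta(s) = \Theta'(s)\, \mathcal{H} s - \Theta''(s)\, A\nabla s \cdot \nabla s.
\]
First I would record the relevant monotonicity facts about $\Theta$. The assumption that $\Theta(\sigma) = \int_0^\sigma \theta(\tau)\,d\tau/\tau$ is finite for some $\sigma > 0$, combined with $\theta$ being nondecreasing, forces $\theta(0)=0$; concavity of $\theta$ then yields $\theta'(\sigma)\sigma \le \theta(\sigma)$, so
\[
\Theta'(\sigma) = \frac{\theta(\sigma)}{\sigma} \ge 0, \qquad \Theta''(\sigma) = \frac{\theta'(\sigma)\sigma - \theta(\sigma)}{\sigma^{2}} \le 0.
\]

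The differential inequality then follows in a few steps. By ellipticity of $A$, the term $-\Theta''(s) A\nabla s \cdot \nabla s$ is nonnegative, so formally $\mathcal{H}\Theta(s) \ge \Theta'(s)\, \mathcal{H} s$. Using $\Theta'(s) = \theta(s)/s \ge 0$ and the bound $\mathcal{H} s \ge c_H s/\delta^{2}$ from Theorem \ref{thm:main}, we get $\mathcal{H}\Theta(s) \ge c_H \theta(s)/\delta^{2}$. Finally the lower bound $s \ge \delta^{\alpha_H}$ and monotonicity of $\theta$ give $\theta(s) \ge \theta(\delta^{\alpha_H})$, and the claim follows.

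The main obstacle is making the chain rule rigorous, since $s \in V_{\loc}(D \times (0,T)) \cap C(D \times (0,T))$ is not smooth. My plan is to use the Steklov average formulation of the supersolution inequality introduced in Section \ref{sec:preliminaries}: starting from \eqref{eqn:wf2} applied to $s$, I test with $\Theta'(s_h)\,\varphi$ for an arbitrary nonnegative $\varphi \in C_c^{\infty}(D \times (0,T))$. On $\spt(\varphi)$, the parabolic distance $\delta$ is bounded below by a positive constant, hence so is $s$ (via \eqref{eqn:barrier2}); therefore $\Theta'$ and $\Theta''$ are Lipschitz on the range of $s$ over $\spt(\varphi)$, which justifies both the chain rule $\nabla \Theta(s) = \Theta'(s)\nabla s$ in $H^{1}_{\loc}$ and the identity $\Theta'(s)\partial_t s_h = \partial_t \Theta(s_h) + o(1)$-type manipulations needed to handle the time derivative. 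Passing to the limit $h \to 0$ using standard Steklov convergence (and the fact that $-\Theta''(s) A\nabla s\cdot \nabla s \ge 0$ is preserved as a lower bound by Fatou, or simply dropped) produces the weak form of the desired inequality.

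Assembling these pieces yields $\mathcal{H}\Theta(s) \ge c_H\, \theta(\delta^{\alpha_H})/\delta^{2}$ in the distributional sense on $D \times (0,T)$, which is the assertion of the lemma.
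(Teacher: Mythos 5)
Your proposal is correct and follows essentially the same route as the paper's proof: the formal chain rule identity $\mathcal{H}\Theta(s) = \Theta'(s)\,\mathcal{H}s - \Theta''(s)\,A\nabla s\cdot\nabla s$, made rigorous by testing the Steklov form \eqref{eqn:wf2} with $\varphi = \Theta'(s_h)\eta$, dropping the nonnegative $-\Theta''(s)\,A\nabla s\cdot\nabla s$ term via concavity and ellipticity, and finishing with $s\,\Theta'(s) = \theta(s)\ge\theta(\delta^{\alpha_H})$. Minor notes: the observation that $\theta(0^+)=0$ is true but not needed (the inequality $\theta'(\tau)\tau\le\theta(\tau)$ follows directly from concavity together with $\theta>0$), and your Lipschitz-on-$\spt\varphi$ remark supplies a justification that the paper leaves implicit.
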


\begin{example}
(i) $\theta(t) = t^{\beta}$ ($0 < \beta \le 1$) is concave and satisfies the above integral condition.
In this case, $\Theta(t) = \beta^{-1} t^{\beta}$.
(ii) $\theta(t) = (\log t)^{- \beta}$ ($\beta > 1$) is concave on $[0, e^{- (\beta + 1)}]$
and satisfies the above integral condition.
\end{example}

\begin{proof}
By assumption, $\theta'(t) t \le \theta(t)$ for all $t \in (0, \infty)$, and thus $\Theta$ is increasing and concave on $(0, \infty)$.
Fix a nonnegative function $\eta \in C^{\infty}_{c}(D \times (0, T))$.
Substituting the test function $\varphi = \Theta'( s_{h} ) \eta$ into \eqref{eqn:wf2},
we get
\begin{align*}
& 
\iint_{D \times (0, T)}
\frac{ \partial s_{h} }{\partial t} ( \Theta'( s_{h} ) \eta ) + [ A \nabla s ]_{h} \cdot \nabla ( \Theta'( s_{h} ) \eta )
\, dx dt
\\
& \ge
c_{H} 
\iint_{D \times (0, T)}
\left[ \frac{ s }{ \delta^{2} } \right]_{h} \left( \Theta'( s_{h} ) \eta \right) \, dx dt.
\end{align*}
Passing to the limit $h \to 0$, we get
\begin{equation*}
\begin{split}
&
- \iint_{ D \times (0, T) } \Theta( s ) \frac{ \partial \eta }{ \partial t } \, dx dt
+
\iint_{ D \times (0, T) } A \nabla s  \cdot \nabla ( \Theta'( s ) \eta ) \, dx dt
\\
& \ge
c_{H} 
\iint_{ D \times (0, T) }
\frac{ s }{ \delta^{2} } \left( \Theta'( s ) \eta \right) \, dx dt.
\end{split}
\end{equation*}
Consider the third term on left-hand side.
Since $\Theta'' \le 0$, it follows from \eqref{eqn:elliptic} that
\[
\iint_{ D \times (0, T) } A \nabla \Theta'( s ) \cdot \nabla \eta \, dx dt
\ge
\iint_{ D \times (0, T) } A \nabla s  \cdot \nabla ( \Theta'( s ) \eta ) \, dx dt.
\]
On the other hand, the integrand on the right-hand is estimated from below by
\[
s \, \Theta'( s ) = \theta( s ) \ge \theta( \delta^{\alpha} ).
\]
Combining them, we obtain the desired differential inequality.
\end{proof}

\begin{theorem}\label{thm:existence}
Assume that $D$ satisfies \eqref{eqn:CDC} and \eqref{eqn:inrad_cond}.
Let $f \in L^{1}_{\loc}(D \times (0, T))$, and assume that there exists a function $\theta$ satisfying the assumption in Lemma \ref{lem:transform} and
\begin{equation}\label{eqn:force_term}
|f(X)|
\le c_{H} \frac{ \theta( \delta(X)^{\alpha_{H}}) }{ \delta(X)^{2} } 
\end{equation}
for almost every $X \in D \times (0, T)$, where $c_{H}$ and $\alpha_{H}$ are the constants in Theorem \ref{thm:main}.
Then, there exists a unique weak solution $u \in V_{\loc}(D \times (0, T) ) \cap C( \cl{ D \times (0, T) } )$ to \eqref{eqn:parabolic}-\eqref{eqn:IV} 
in the sense that
\begin{equation}\label{eqn:boundary_esti_for_u}
|u(X)|
\le \Theta( 15 \delta(X)^{\alpha_{H}} )
\end{equation}
holds for all $X \in D \times (0, T)$.
\end{theorem}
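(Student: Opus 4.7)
The plan is to construct $u$ as a limit of approximating problems with bounded source terms, using $W := \Theta(s_\Gamma)$ as a two-sided barrier. By Lemma~\ref{lem:transform} applied to the given $\theta$, the function $W$ lies in $V_{\loc}(D \times (0,T)) \cap C(D \times (0,T))$ and is a nonnegative supersolution with $\mathcal{H} W \ge c_H \theta(\delta^{\alpha_H})/\delta^2 \ge |f|$ a.e. Because $0 \le s_\Gamma \le 15\,\delta^{\alpha_H}$ by Theorem~\ref{thm:main} and $\Theta$ is continuous with $\Theta(0) = 0$, $W$ extends continuously by zero across the parabolic boundary, and the inradius hypothesis \eqref{eqn:inrad_cond} bounds $W$ uniformly on $D \times (0,T)$ by $\Theta(15(\inrad{D})^{\alpha_H})$. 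This $W$ is the target bound \eqref{eqn:boundary_esti_for_u} I aim to inherit for $u$.

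For the approximation step, I would choose cutoffs $\chi_k \in C_c^\infty(D \times (0,T))$ with $0 \le \chi_k \le 1$ that exhaust $D \times (0,T)$ by sets at positive parabolic distance from the parabolic boundary (and, if $D$ is unbounded, also from spatial infinity), and set $f_k := f \chi_k$. Since $|f| \le c_H \theta(\delta^{\alpha_H})/\delta^2$ is bounded on $\spt \chi_k$, each $f_k \in L^\infty(D \times (0,T))$ has compact support. If $D$ is unbounded, solve $\mathcal{H} u_{k,j} = f_k$ first on $(D \cap B(0, R_j)) \times (0, T)$ with zero initial-lateral data by standard Galerkin theory, observe via Lemma~\ref{lem:comparison} that $|u_{k,j}| \le W$ uniformly in $j$, and pass to $R_j \to \infty$ to obtain $u_k$ solving the problem on $D \times (0,T)$. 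Lemma~\ref{lem:boundary_regularity_esti} together with Lemma~\ref{lem:interior_hoelder_esti} shows $u_k \in C(\overline{D \times (0,T)})$ with $u_k = 0$ on the parabolic boundary, and Lemma~\ref{lem:comparison} applied to $\pm u_k$ and $W$ gives $|u_k| \le W$ pointwise.

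To pass to the limit $k \to \infty$, fix any compact $K \subset D \times (0,T)$. For $k$ large enough, $f_k \equiv f$ on $K$, so by Lemmas~\ref{lem:energy_estimate} and \ref{lem:interior_hoelder_esti} the $u_k$ are uniformly bounded in $V_{\loc}$ and uniformly locally Hölder continuous on $K$. A diagonal Arzelà--Ascoli argument extracts a subsequence converging locally uniformly (and weakly in $V_{\loc}$) to some $u \in V_{\loc}(D \times (0,T))$ that weakly solves $\mathcal{H} u = f$ and still satisfies $|u| \le W \le \Theta(15 \delta^{\alpha_H})$. Since $\Theta(15 \delta^{\alpha_H}) \to 0$ at the parabolic boundary, $u$ extends continuously by zero there, so $u \in C(\overline{D \times (0,T)})$ with the required estimate \eqref{eqn:boundary_esti_for_u}. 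Uniqueness follows by applying Lemma~\ref{lem:comparison} to the difference of any two such solutions, which has zero parabolic boundary values and zero right-hand side.

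The main obstacle I anticipate is coordinating the approximation with the pointwise barrier, specifically ensuring that every invocation of Lemma~\ref{lem:comparison} is legitimate. That lemma requires its arguments to be continuous up to the closure, so the quantitative decay $s_\Gamma \le 15\,\delta^{\alpha_H}$ from Theorem~\ref{thm:main}---not mere positivity---is essential: it is what forces $W$ to extend continuously as a genuine barrier vanishing on the parabolic boundary, and what makes the inherited pointwise bound \eqref{eqn:boundary_esti_for_u} survive the limit. A secondary concern is handling unbounded $D$ and $T = \infty$ in the Galerkin step, but both are accommodated by the uniform bound $W \le \Theta(15(\inrad{D})^{\alpha_H})$, which is independent of every approximation parameter.
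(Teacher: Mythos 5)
Your proof is correct and follows essentially the same strategy as the paper's: use $\Theta(s_\Gamma)$ from Lemma~\ref{lem:transform} as a barrier, approximate $f$ by bounded source terms, invoke the comparison principle to inherit the barrier bound on each $u_k$, and pass to the limit via interior estimates. The only cosmetic differences are that the paper reduces to $f \ge 0$ and takes a monotone approximation $f_k \uparrow f$ (so pointwise convergence of $u_k$ is immediate), whereas you approximate by cutoffs and extract via Arzelà--Ascoli, and you treat unbounded $D$ more explicitly via exhaustion; both of these are sound variants that do not change the substance of the argument.
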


\begin{proof}
Without loss of generality, we may assume that $f \ge 0$.
Take a sequence $\{ f_{k} \}_{k = 1}^{\infty} \subset L^{\infty}(D \times (0, T))$
such that $f_{k} \uparrow f$ in $D \times (0, T)$.
We consider the approximated problems
\begin{align*}
\mathcal{H} u_{k} = f_{k} \quad & \text{in} \ D \times \R_{+},
\\
u_{k} = 0 \quad & \text{on} \ \partial D \times \R_{+},
\\
u_{k}(\cdot, 0) = 0 \quad & \text{in} \ D.
\end{align*}
By interior regularity theory, we may assume that $u_{k} \in C(D \times (0, T])$.
By the comparison principle and Lemma \ref{lem:transform}, we have
\begin{equation}\label{eqn:barrier_bound_k}
0 \le u_{k}(X) \le \Theta(s(X)) \le \Theta( 15 \delta(X)^{\alpha} )
\end{equation}
for all $X$. 
Consider the bounded function $u(X) = \lim_{k \to \infty} u_{k}(X)$.
By Lemma \ref{lem:energy_estimate}, 
\begin{align*}
\nabla u_{k} \wkto \nabla u \quad & \text{weakly in } L^{2}_{\loc}(D \times (0,T)).
\end{align*}
Thus, $\mathcal{H} u = f$ in $D \times (0, T)$.
By interior regularity, $u \in C(D \times (0, T])$.
Since $u$ is controlled by $\delta(X)$, it also belongs to $C( \cl{D \times (0, T)} )$.
\end{proof}

Let $0 < \alpha \le 1$ and $E \subset \R^{n} \times \R$.
For a function $u$ on $E$, we define the parabolic H\"{o}lder seminorm as
\[
[ u ]_{ \alpha, E \times I}
:=
\sup_{X, Y \in E}
\frac{ |u(X) - u(Y)| }{ d_{p}(X, Y)^{\alpha} }.
\]
For $E = D \times I$,
the parabolic H\"{o}lder space $C^{\alpha, \alpha / 2}(D \times I)$ is then defined by
\[
\| u \|_{C^{\alpha, \alpha / 2}( D \times I )}
:=
\| u \|_{L^{\infty}( D \times I )}
+
(\inrad D)^{\alpha} [ u ]_{ \alpha, D \times I }.
\]
We prove the following.

\begin{theorem}\label{thm:main2}
Assume that \eqref{eqn:CDC} holds with $\Gamma = \partial D$ and $\inrad D < \infty$.
Let $0 < \alpha \le 1$.
Assume that $f \delta^{2 - \alpha} \in L^{\infty}(D \times (0, T))$.
Then, there exists a unique weak solution to the problem \eqref{eqn:parabolic}-\eqref{eqn:IV}.
Moreover, 
\begin{align*}
\| u \|_{C^{ \alpha_{\star}, \alpha_{\star} / 2 }( D \times (0, T)) }
\le
C \left(
(\inrad D)^{\alpha }\| f \delta^{2 - \alpha} \|_{L^{\infty}(D \times (0, T))}
\right),
\end{align*}
where $C_{\star}$ and $\alpha_{\star}$ are constants depending only on
$n$, $L$, $\gamma$ and $\alpha$.
\end{theorem}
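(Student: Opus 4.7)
The plan is to reduce Theorem \ref{thm:main2} to the existence result Theorem \ref{thm:existence} combined with the interior H\"older estimate Lemma \ref{lem:interior_hoelder_esti}. Set $M := \|f \delta^{2-\alpha}\|_{L^\infty(D \times (0,T))}$, so the hypothesis reads $|f(X)| \le M \delta(X)^{\alpha - 2}$. I would apply Theorem \ref{thm:existence} with
\[
\theta(\tau) := c_H^{-1} M (\inrad D)^{(\alpha - \alpha_H)_+} \tau^{\beta}, \qquad \beta := \min(\alpha, \alpha_H)/\alpha_H \in (0, 1].
\]
This $\theta$ is $C^1$, non-decreasing, and concave on $(0, \infty)$, satisfies $\int_0^\sigma \theta(\tau)\, d\tau/\tau < \infty$, and the hypothesis \eqref{eqn:force_term} holds because $\delta \le \inrad D$ (the factor $(\inrad D)^{(\alpha - \alpha_H)_+}$ is precisely what bridges the gap between $\delta^{\alpha-2}$ and $\delta^{\alpha_H - 2}$ when $\alpha > \alpha_H$). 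Theorem \ref{thm:existence} then produces a unique continuous weak solution $u$ satisfying
\[
|u(X)| \le C M (\inrad D)^{\alpha - \alpha_\star} \delta(X)^{\alpha_\star}, \qquad \alpha_\star := \min(\alpha, \alpha_H),
\]
for all $X \in D \times (0,T)$.

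Next I would upgrade this pointwise decay to a global H\"older estimate by combining it with Lemma \ref{lem:interior_hoelder_esti}. Given $X, Y \in D \times (0,T)$, let $r := d_p(X, Y)$ and assume without loss of generality $\delta(X) \le \delta(Y)$. In the \emph{boundary regime} $r \ge \delta(X)/2$, one has $\delta(X) \le 2r$ and $\delta(Y) \le \delta(X) + r \le 3r$, so the decay bound yields directly
\[
|u(X) - u(Y)| \le |u(X)| + |u(Y)| \le C M (\inrad D)^{\alpha - \alpha_\star} r^{\alpha_\star}.
\]
In the \emph{interior regime} $r < \delta(X)/2$, the cylinder $Q(X, \delta(X)/2)$ lies inside $D \times (0,T)$, because by definition $\delta(X) \le \min(d_{\R^n}(x, \partial D), \sqrt{t})$. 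Applying Lemma \ref{lem:interior_hoelder_esti} with $R := \delta(X)/4$, on $Q(X, 2R)$ one has $\delta \ge 3\delta(X)/4$, so $R^2 \|f\|_{L^\infty(Q(X,2R))} \lesssim M \delta(X)^{\alpha}$, while the oscillation is controlled via the decay by $CM(\inrad D)^{\alpha - \alpha_\star}\delta(X)^{\alpha_\star}$. Taking $\alpha_\star \le \alpha_0$, the interior estimate returns
\[
|u(X) - u(Y)| \le C M (\inrad D)^{\alpha - \alpha_\star} \left( \frac{r}{\delta(X)} \right)^{\alpha_0} \delta(X)^{\alpha_\star} \le C M (\inrad D)^{\alpha - \alpha_\star} r^{\alpha_\star},
\]
using $r/\delta(X) \le 1$ and $\alpha_\star \le \alpha_0$.

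Setting $\alpha_\star := \min(\alpha, \alpha_H, \alpha_0)$ and using $\delta \le \inrad D$ (whence $\|u\|_{L^\infty} \le CM(\inrad D)^{\alpha}$) assembles the two regimes into the stated estimate on the parabolic H\"older norm $C^{\alpha_\star, \alpha_\star/2}$. Uniqueness follows from the comparison principle Lemma \ref{lem:comparison} applied to the difference of two such continuous weak solutions, both of which vanish on the parabolic boundary. The main technical point I expect to require care is the case $\alpha > \alpha_H$: the intrinsic boundary decay of $u$ is then limited by the exponent $\alpha_H$ coming from the capacity density condition, so the excess $\alpha - \alpha_H$ must be absorbed into a power of $\inrad D$ in order to obtain the clean right-hand side of the stated inequality; the unified choice of $\theta$ above is designed to accomplish this absorption in a single stroke.
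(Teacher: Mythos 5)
Your proposal is correct and follows the same route as the paper: invoke Theorem \ref{thm:existence} (with an explicit choice of $\theta$) to obtain existence and the pointwise decay $|u(X)| \lesssim M(\inrad D)^{\alpha-\alpha_\star}\delta(X)^{\alpha_\star}$, then split into a near-boundary regime handled by the decay bound and an interior regime handled by Lemma \ref{lem:interior_hoelder_esti}. Your choice $R=\delta(X)/4$ in the interior regime is in fact slightly more careful than the paper's $r=\delta(X)$, which if read literally would not give $Q(X,2r)\subset D\times(0,T)$.
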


\begin{proof}
The existence of this solution in $V_{\loc}(D \times (0, T)) \cap C( \cl{D \times (0, T)} )$ follows from Theorem \ref{thm:existence} directly.
As in \eqref{eqn:boundary_esti_for_u}, $u(X)$ is controlled by $\delta(X)^{\alpha_{H}}$.
Thus, it also belongs to $C(\cl{D})$.
In particular, for $\alpha_{1} = \min\{ \alpha_{H}, \alpha \}$, we have
\begin{equation}\label{eqn:boundary_esti_for_u-2}
|u(X)| \le C M (\inrad D)^{ - \alpha_{1} } \delta(X)^{ \alpha_{1} },
\end{equation}
where $M = (\inrad D)^{\alpha }\| f \delta^{2 - \alpha} \|_{L^{\infty}(D \times (0, T))}$.

We set $\alpha_{\star} = \min\{ \alpha_{0}, \alpha_{1} \}$, where $\alpha_{0}$ is a positive exponent in Lemma \ref{lem:interior_hoelder_esti}.
Take any $X, Y \in D \times (0, T)$.
If $d_{p}(X, Y) > \max\{ \delta(X),  \delta(Y) \} / 2$, then, \eqref{eqn:boundary_esti_for_u-2} gives
\[
\begin{split}
|u(X) - u(Y)|
& \le
|u(X)| + |u(Y)|
\\
& \le
\left( C M \right)
R^{- \alpha_{0}} \left( \delta(X) + \delta(Y) \right)^{ \alpha_{0} }
\\
& \le
\left( C M \right)
R^{- \alpha_{\star}} d_{p}(X, Y)^{ \alpha_{\star} }.
\end{split}
\]
Therefore, we may assume that $d_{p} (X, Y) \le \delta(X) / 2$ without loss of generality.
Set $r := \delta(X)$. Since $Q(X, 2 r) \subset D \times (0, T)$,  Lemma \ref{lem:interior_hoelder_esti} gives
\[
\begin{split}
|u(X) - u(Y)|
& \le
C \left( \frac{ d_{p}(X, Y) }{r} \right)^{ \alpha_{\star} }
\left( \osc_{Q(X, r )} u + r^{2} \| f \|_{L^{\infty}(Q(X, r))} \right)
\\
& \le
C \left( \frac{ d_{p}(X, Y) }{r} \right)^{ \alpha_{\star} }
\left( \osc_{Q(X, r )} u + r^{\alpha} M \right).
\end{split}
\]
Meanwhile, by \eqref{eqn:boundary_esti_for_u-2}, we have
\[
\begin{split}
|u(Z)|
\le
\left( C M \right) R^{ - \alpha_{1} } r^{ \alpha_{1} }
\le
\left( C M \right) R^{ - \alpha_{\star} } r^{ \alpha_{\star} }
\end{split}
\]
for all $Z \in Q(x, r)$.
Combining the two inequalities, we obtain the desired H\"{o}lder seminorm estimate.
\end{proof}

\begin{corollary}
Assume that $D$ is bounded and \eqref{eqn:CDC} holds with $\Gamma = \partial D$.
Let $0 < \alpha \le 1$.
Assume also that $f \delta^{t} \in L^{\infty}(0, T; L^{q}(D))$
for some $n / 2 < q < \infty$ and $0 \le t < 2 - n / q$.
Then, there exists a unique weak solution to the problem \eqref{eqn:parabolic}-\eqref{eqn:IV}.
Moreover, 
\begin{align*}
\| u \|_{C^{ \alpha_{\star}, \alpha_{\star} / 2 }( D \times (0, T)) }
\le
C (\diam D)^{ 2 - n / q - t }\| f \delta^{t}  \|_{L^{\infty}(0, T; L^{q}(D)) },
\end{align*}
where $C_{\star}$ and $\alpha_{\star}$ are constants depending only on
$n$, $L$, $q$, $t$ and $\gamma$.
\end{corollary}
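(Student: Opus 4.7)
The plan is to mirror the proof of Theorem \ref{thm:main2}, with the two pointwise regularity inputs (the interior estimate in Lemma \ref{lem:interior_hoelder_esti} and the boundary estimate in Lemma \ref{lem:boundary_regularity_esti}) upgraded to $L^{\infty}(0, T; L^{q})$ source terms. Throughout, set $M := \|f \delta^{t}\|_{L^{\infty}(0, T; L^{q}(D))}$ and $\beta := 2 - n/q - t > 0$.

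For the interior estimate, classical parabolic De Giorgi--Nash--Moser theory with $L^{\infty}(0, T; L^{q})$ data (see \cite{MR241822, MR1465184}) yields, for every $Q(X_{0}, R) \subset D \times (0, T)$ and $q > n/2$,
\[
\osc_{Q(X_{0}, r)} u
\le
C \left(\frac{r}{R}\right)^{\alpha_{0}}
\bigl(\osc_{Q(X_{0}, R)} u + R^{2 - n/q} \|f\|_{L^{\infty}(0, T; L^{q}(B(x_{0}, R)))}\bigr).
\]
Choosing $R = \delta(x_{0})/2$ and using $\delta \ge R$ on $B(x_{0}, R)$ to bound $\|f\|_{L^{\infty}(0, T; L^{q}(B(x_{0}, R)))} \le C R^{-t} M$, the right-hand side becomes $C(r/R)^{\alpha_{0}} (\osc u + R^{\beta} M)$. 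In parallel, I would re-run the proof of Lemma \ref{lem:boundary_regularity_esti} in Appendix \ref{sec:boundary_regularity} with an $L^{\infty}(0, T; L^{q})$ source term, obtaining the analogous boundary estimate
\[
\osc_{Q(\Xi, r) \cap D \times (0, T)} u
\le
C_{1} \left(\frac{r}{R}\right)^{\alpha_{B}} \osc_{Q(\Xi, 4R) \cap D \times (0, T)} u + C_{2} R^{\beta} M
\]
for $\Xi \in \partial D \times ((4R)^{2}, T)$ and $0 < r \le R$, with constants depending on $n, L, q, t, \gamma$.

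Existence and uniqueness of $u \in V_{\loc}(D \times (0, T)) \cap C(\cl{D \times (0, T)})$ are then obtained by a standard truncation argument: take $f_{k} := \sign(f) \min\{|f|, k\} \in L^{\infty}(D \times (0, T))$, apply Theorem \ref{thm:main2} to each $f_{k}$ to produce $u_{k}$, derive the uniform bound $\|u_{k}\|_{L^{\infty}} \le C (\diam D)^{\beta} M$ by iterating the boundary estimate above down from $R = \diam D$ (using $u_{k} = 0$ on the parabolic boundary), and pass to the limit $k \to \infty$ via the interior estimate. A further iteration of the boundary estimate then yields the decay $|u(X)| \le C M \delta(x)^{\alpha_{\star}}$ with $\alpha_{\star} := \min(\alpha_{0}, \alpha_{B}, \beta)$.

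The Hölder seminorm is finally assembled as in the proof of Theorem \ref{thm:main2}: for $X, Y \in D \times (0, T)$, if $d_{p}(X, Y) > \delta(x)/2$ the boundary decay $|u(X)| + |u(Y)| \le CM(\delta(x) + \delta(y))^{\alpha_{\star}}$ gives the Hölder bound directly, while if $d_{p}(X, Y) \le \delta(x)/2$ the interior estimate at scale $R = \delta(x)/2$, combined with the boundary decay to control $\osc_{Q(X, \delta(x)/2)} u$, yields the required seminorm bound. The main obstacle is the $L^{q}$-upgrade of Lemma \ref{lem:boundary_regularity_esti}: the weak Harnack inequality of Lemma \ref{lem:whi} must be rewritten with $L^{\infty}(0, T; L^{q})$ data, a modification that is classical in spirit (going back to Aronson--Serrin) but requires careful tracking of the weight and scaling near $\partial D$ in the De Giorgi iteration on CDC domains.
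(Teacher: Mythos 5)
Your interior step is sound: at scale $R = \delta(x_0)/2$ one has $\delta(y) \ge R$ for $y \in B(x_0, R)$, so $\|f\|_{L^\infty(0,T;L^q(B(x_0,R)))} \le R^{-t} M$ and the interior H\"{o}lder estimate with $L^q$ data gives the claimed $R^\beta M$ forcing term. The gap is in the boundary estimate. You assert that a re-run of Lemma \ref{lem:boundary_regularity_esti} with $L^\infty(0,T;L^q)$ data yields, at $\Xi = (\xi, t_0) \in \partial D \times (0,T)$,
\[
\osc_{Q(\Xi, r) \cap D \times (0, T)} u
\le
C_1 \left(\frac{r}{R}\right)^{\alpha_B} \osc_{Q(\Xi, 4R) \cap D \times (0, T)} u + C_2 R^\beta M,
\]
but the forcing quantity that actually appears there is of the order $R^{2-n/q}\, \|f\|_{L^\infty(0,T;L^q(B(\xi, R)\cap D))}$, and for a boundary center $\xi$ you have $\delta(y) \le R$ with $\delta(y) \to 0$ on $B(\xi, R) \cap D$. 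So there is no bound of the form $\|f\|_{L^q(B(\xi,R)\cap D)} \le C R^{-t} M$ when $t > 0$; indeed $f = (f\delta^t)\,\delta^{-t}$ need not even belong to $L^q(B(\xi,R) \cap D)$ under the standing hypotheses (take $q = n$ on a Lipschitz domain and $1/n \le t < 1 = 2 - n/q$: then $\delta^{-t} \notin L^q$ near $\partial D$). This is not an issue of ``careful tracking of the weight in the De Giorgi iteration''; the weak Harnack ingredient of Lemma \ref{lem:whi} is simply unavailable because the relevant $L^q$ norm of $f$ is not finite. Notice that this is precisely the phenomenon the barrier of Theorem \ref{thm:main} was built to circumvent: it produces boundary decay without ever asking $f$ to be locally $L^q$ up to $\partial D$.

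The paper takes a different route that sidesteps the obstruction. It first establishes the unweighted case $t = 0$, where $\|f\|_{L^\infty(0,T;L^q(B(\xi,R)\cap D))} \le M$ holds trivially for every center $\xi$, so the adaptation of Lemma \ref{lem:interior_hoelder_esti} and the Appendix \ref{sec:boundary_regularity} argument to $L^q$-in-space data does go through and yields the bound $\| u \|_{C^{\alpha_\star, \alpha_\star/2}} \le C (\diam D)^{2-n/q} \| f \|_{L^\infty(0,T;L^q)}$. The full range $0 \le t < 2 - n/q$ is then obtained by interpolating the linear solution map $f \mapsto u$ between this endpoint ($L^\infty(0,T;L^{q_0})$, weight $\delta^0$) and the Theorem \ref{thm:main2} endpoint ($q = \infty$, weight $\delta^{2-\alpha}$), choosing $\alpha$ small enough in terms of $q$ and $t$ so that the interpolated exponent and weight cover the stated range. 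If you want a genuinely direct proof for $t > 0$, the object you would need to modify is not Lemma \ref{lem:boundary_regularity_esti} but rather the barrier/comparison machinery of Theorems \ref{thm:main} and \ref{thm:existence}, so as to accept $L^q$-in-space data with a $\delta$-weight.
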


\begin{proof}
Assume first that $f \in L^{\infty}(0, T; L^{q}(D))$ with $n / 2 < q < \infty$.
Then, the results in \cite{MR241822} are extended by the arguments mentioned above and Section \ref{sec:boundary_regularity} to yield
\begin{align*}
\| u \|_{C^{ \alpha_{\star}, \alpha_{\star} / 2 }( D \times (0, T)) }
\le
C (\diam D)^{ 2 - n / q }\| f  \|_{L^{\infty}(0, T; L^{q}(D)) },
\end{align*}
where $C$ and $\alpha_{\star}$ is a positive constant depending only on $n$, $L$, $q$ and $\gamma$.
The desired result follows from the interpolation theorem.
\end{proof}

\section{Inhomogeneous boundary value}\label{sec:boundary_value_problem}

Finally, we extend Theorems \ref{thm:main2} to the following inhomogeneous boundary value problem
\begin{align}
\mathcal{H} u = f \quad & \text{in} \ D \times (0, T),
\label{eqn:parabolic2}
\\
u = g \quad & \text{on} \ \partial D \times (0, T),
\label{eqn:BV2}
\\
u(\cdot, 0) = u_{0} \quad & \text{in} \ D.
\label{eqn:IV2}
\end{align}
For the problem to be well-posed and to ensure the continuity of the solution up to the parabolic boundary, we assume the $0$-th order compatibility condition
\begin{equation}\label{eqn:compatibility_cond}
g(\xi, 0) = u_0(\xi) \quad \text{for all } \xi \in \partial D.
\end{equation}
Under this assumption, we introduce the norms
\[
\| g \|_{C^{\alpha, \alpha / 2}( \partial D \times (0, T) )}
:=
\| g \|_{L^{\infty}( \partial D \times (0, T) )}
+
(\inrad D)^{\alpha} [ g ]_{ \alpha, \partial D \times (0, T) }
\]
and
\[
\| u_{0} \|_{C^{\alpha}( D )}
:=
\| u_{0} \|_{L^{\infty}( D )}
+
(\inrad D)^{\alpha} [ u_{0} ]_{ \alpha, D }.
\]

\begin{theorem}\label{thm:main3}
Assume that \eqref{eqn:CDC} holds with $\Gamma = \partial D$ and $\inrad D < \infty$.
Let $0 < \alpha \le 1$, and assume that $f \delta^{2 - \alpha} \in L^{\infty}(D \times (0, T))$.
Assume that $g \in C^{\alpha}(\partial D \times (0, T))$ and $u_{0} \in C^{\alpha}(\cl{D})$ satisfy \eqref{eqn:compatibility_cond}.
Then, there exists a unique weak solution $u \in V_{\loc}(D \times (0, T)) \cap C(\cl{D \times (0, T)})$
to the problem \eqref{eqn:parabolic2}-\eqref{eqn:BV2}.
Moreover, 
\begin{align*}
& 
\| u \|_{C^{ \alpha_{\star}, \alpha_{\star} / 2 }( D \times (0, T)) }
\\
& \quad
\le
C \left(
(\inrad D)^{\alpha }\| f \delta^{2 - \alpha} \|_{L^{\infty}(D \times (0, T))}
+
\| g \|_{C^{\alpha. \alpha / 2}( \partial D \times (0, T))}
+
\| u_{0} \|_{C^{\alpha}(D)}
\right),
\end{align*}
where $C_{\star}$ and $\alpha_{\star}$ are constants depending only on
$n$, $L$, $\gamma$ and $\alpha$.
\end{theorem}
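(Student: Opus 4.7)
The plan is to reduce to Theorem \ref{thm:main2} by linearity and handle the nonzero boundary and initial data through barrier comparison using $s_{\Gamma}$ from Theorem \ref{thm:main}. Writing $u = u_{f} + v$, where $u_{f}$ is the solution from Theorem \ref{thm:main2} with source $f$ and zero parabolic boundary data, reduces the task to the homogeneous Cauchy-Dirichlet problem $\mathcal{H} v = 0$ in $D \times (0, T)$ with $v = g$ on $\partial D \times (0, T)$ and $v(\cdot, 0) = u_{0}$ in $D$. Since $u_{f}$ already obeys the desired estimate, it suffices to construct $v$ and estimate it by $\| g \|_{C^{\alpha, \alpha / 2}(\partial D \times (0, T))} + \| u_{0} \|_{C^{\alpha}(D)}$.

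For existence, I would first extend the data on the parabolic boundary to a function $\Phi \in C^{\alpha, \alpha / 2}( \overline{ D \times [0, T] } )$ via a parabolic McShane-Whitney extension; the compatibility condition \eqref{eqn:compatibility_cond} ensures that the combined boundary-initial datum is continuous on the parabolic boundary and that the extension preserves the H\"older norm up to a dimensional constant. Mollifying $\Phi$ produces smooth approximations $\Phi_{k}$, and solving the corresponding classical problems yields smooth approximate solutions $v_{k}$. Lemma \ref{lem:global_boundedness} provides the uniform bound $\| v_{k} \|_{L^{\infty}} \le \| \Phi_{k} \|_{L^{\infty}}$, Lemma \ref{lem:energy_estimate} gives uniform local energy bounds, and Lemma \ref{lem:interior_hoelder_esti} yields uniform local H\"older continuity, so a subsequence converges uniformly on compact subsets of $D \times (0, T]$ to the desired $v$.

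For the global H\"older estimate, the core step is boundary oscillation decay at each parabolic boundary point $\Xi_{0}$. For a lateral point $\Xi_{0} = (\xi_{0}, t_{0}) \in \partial D \times (0, T)$ with $t_{0} > (4R)^{2}$, Lemma \ref{lem:boundary_regularity_esti} applied with $f = 0$ gives
\[
\osc_{ Q( \Xi_{0}, r) \cap D \times (0, T) } v
\le
C_{1} \left( \frac{r}{R} \right)^{\alpha_{B}} \osc_{ Q( \Xi_{0}, 4R) \cap D \times (0, T) } v
+
2 [g]_{\alpha} (4R)^{\alpha},
\]
and iteration in $R$ produces $\osc_{ Q( \Xi_{0}, r ) } v \le C r^{ \alpha_{\star} }$ for $\alpha_{\star} = \min\{ \alpha_{B}, \alpha \}$. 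For points near the initial slice $D \times \{ 0 \}$, where Lemma \ref{lem:boundary_regularity_esti} does not apply directly, I would compare $\pm( v - u_{0}(\xi_{0}) )$ against the barrier $\Psi(X) := [u_{0}]_{\alpha} d_{p}(X, \Xi_{0})^{\alpha} + M s_{\Gamma}(X)$ for a suitable constant $M$ depending on $\osc v$ and $[g]_{\alpha}$. Since $\delta_{\Gamma}$ is defined to include distance to both $\partial D \times (0, T)$ and $D \times \{ 0 \}$, the barrier $s_{\Gamma}$ handles both types of boundary in a unified way, and $\Psi$ dominates $v - u_{0}(\xi_{0})$ on the parabolic boundary of a neighborhood of $\Xi_{0}$ by the H\"older continuity of $(g, u_{0})$ together with \eqref{eqn:barrier2}; Lemma \ref{lem:comparison} then yields the required decay of $|v(X) - u_{0}(\xi_{0})|$.

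The main obstacle is precisely this unified treatment of lateral and initial boundary points, since direct iteration of Lemma \ref{lem:boundary_regularity_esti} fails near $t = 0$ and a separate barrier argument using $s_{\Gamma}$ is needed to bridge the initial slice and the lateral boundary. Once the boundary decay is in hand, combining it with the interior H\"older estimate via Lemma \ref{lem:interior_hoelder_esti}, exactly as in the final portion of the proof of Theorem \ref{thm:main2}, produces the global H\"older bound with exponent $\alpha_{\star} := \min\{ \alpha_{0}, \alpha_{B}, \alpha_{H}, \alpha \}$.
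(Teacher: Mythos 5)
Your decomposition $u = u_{f} + v$ with $u_{f}$ from Theorem~\ref{thm:main2} is exactly the paper's reduction of Theorem~\ref{thm:main3} to Theorem~\ref{thm:main4}, and your treatment of the lateral boundary by iterating Lemma~\ref{lem:boundary_regularity_esti} and of the interior by Lemma~\ref{lem:interior_hoelder_esti} also matches. The point of divergence is the initial slice, and there your proposed barrier has a genuine gap.

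You propose comparing $\pm\bigl(v - u_{0}(\xi_{0})\bigr)$ against $\Psi(X) = [u_{0}]_{\alpha}\, d_{p}(X,\Xi_{0})^{\alpha} + M\, s_{\Gamma}(X)$ via Lemma~\ref{lem:comparison}, which requires $\Psi$ to be a supersolution of $\mathcal{H}\Psi \ge 0$. It is not. For $\Xi_{0}=(\xi_{0},0)$ and $0 < \alpha \le 1$, the spatial part of $d_{p}(X,\Xi_{0})^{\alpha}$ is $|x-\xi_{0}|^{\alpha}$, which for $n \ge 2$ satisfies $\laplacian |x-\xi_{0}|^{\alpha} = \alpha(\alpha+n-2)|x-\xi_{0}|^{\alpha-2} > 0$, i.e.\ it is \emph{subharmonic}; the same sign persists for a general uniformly elliptic $A$. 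In the region $\{|x-\xi_{0}| > t^{1/2}\}$ one therefore has $\mathcal{H}\bigl(d_{p}^{\alpha}\bigr) \le -c\,|x-\xi_{0}|^{\alpha-2} < 0$, and moreover the interface $\{|x-\xi_{0}|=t^{1/2}\}$ contributes an additional singular subharmonic defect coming from the convex kink of the max. The correction $M s_{\Gamma}$ cannot absorb this: near the initial slice and away from $\partial D$ one has $\delta_{\Gamma}(X) = t^{1/2}$, so $\mathcal{H}(M s_{\Gamma}) \gtrsim M\, t^{\alpha_{H}/2-1}$ is a regular function of $t$ alone, which neither dominates $|x-\xi_{0}|^{\alpha-2}$ for $|x-\xi_{0}| \ll t^{1/2 \cdot (2-\alpha)/(2-\alpha_{H})}$ when $\alpha_{H} > \alpha$ nor compensates the singular kink term on the interface. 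So the claimed comparison does not go through as stated, and since $\alpha_{H}$ depends only on $(n,L,\gamma)$ while $\alpha$ is given, one cannot just assume $\alpha_{H} \le \alpha$.

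The paper's Theorem~\ref{thm:main4} sidesteps the initial slice entirely by a different device. It extends $u_{0}$ to $\R^{n}$ by McShane, extends $\mathcal{H}$ to an operator $\tilde{\mathcal{H}}$ on $\R^{n}\times\R_{+}$, and forms the whole-space caloric extension $P\tilde{u}_{0}(x,t) = \int_{\R^{n}} p(x,y,t)\,\tilde{u}_{0}(y)\,dy$. Using the Aronson two-sided heat-kernel bounds, one shows $P\tilde{u}_{0}$ is in $C^{\alpha',\alpha'/2}$ with the right norm control. Then $v := u - P\tilde{u}_{0}$ solves $\mathcal{H}v=0$ with \emph{zero} initial data and lateral data $\tilde{g} = g - P\tilde{u}_{0}$ which, by the compatibility condition~\eqref{eqn:compatibility_cond}, vanishes on $\partial D\times\{0\}$. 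The decay $\|v(\cdot,t)\|_{L^{\infty}(D)} \lesssim M\, t^{\alpha'/2}$ near $t=0$ is then an immediate consequence of the parabolic maximum principle and the H\"{o}lder continuity of $\tilde{g}$ in time, with no barrier needed at the initial slice; the lateral estimate proceeds as you describe. This is not merely a stylistic difference: it avoids precisely the supersolution verification that breaks your $\Psi$. If you want to keep a barrier argument, you would need to replace $d_{p}(X,\Xi_{0})^{\alpha}$ by a genuine parabolic supersolution (e.g.\ a carefully constructed radial-plus-time profile), which essentially reproduces the heat-semigroup estimate in disguise.
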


It suffices to show the following statement. We establish this using a combination of Duhamel's formula around the initial time and standard parabolic regularity.

\begin{theorem}\label{thm:main4}
Assume that \eqref{eqn:CDC} holds with $\Gamma = \partial D$ and $\inrad D < \infty$.
Assume that $g \in C^{\alpha}(\partial D \times (0, T))$ and $u_{0} \in C^{\alpha}(\cl{D})$ satisfy \eqref{eqn:compatibility_cond}.
Let $u$ be a solution to \eqref{eqn:parabolic2}-\eqref{eqn:IV2} with respect to $f = 0$.
Then,
\[
\| u \|_{C^{\alpha_{\star \star}}(D \times (0, T))}
\le
C \left( \| g \|_{C^{\alpha, \alpha / 2}( \partial D \times (0, T)) } + \| u_{0} \|_{C^{\alpha}(D)} \right),
\] 
where $C$ and $\alpha_{\star \star}$ are constants depending only on $n$, $L$, $\gamma$ and $\alpha$.
\end{theorem}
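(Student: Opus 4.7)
The plan is to use linearity to reduce to the zero-initial-data case via a Cauchy-problem extension, then exploit a pure-time barrier together with Lemmas \ref{lem:interior_hoelder_esti}--\ref{lem:boundary_regularity_esti} to conclude. Write $M := \|g\|_{C^{\alpha, \alpha/2}(\partial D \times (0, T))} + \|u_0\|_{C^\alpha(D)}$.

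\textbf{Step 1 (Cauchy extension).} Extend $A$ to a bounded uniformly elliptic matrix $\tilde A$ on $\R^n \times (0, T)$ (e.g.\ $\tilde A \equiv \mathrm{Id}$ outside $D \times (0, T)$) and extend $u_0$ to a bounded $\tilde u_0 \in C^\alpha(\R^n)$ with $\|\tilde u_0\|_{C^\alpha(\R^n)} \le C \|u_0\|_{C^\alpha(D)}$ via a McShane/Whitney extension. Let $\bar v$ be the unique bounded weak solution of the Cauchy problem $\tilde{\mathcal H}\bar v = 0$ in $\R^n \times (0, T)$, $\bar v(\cdot, 0) = \tilde u_0$. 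An initial-time oscillation-decay estimate -- a simpler variant of Lemma \ref{lem:boundary_regularity_esti}, since $\R^n \times \{t < 0\}$ automatically has full parabolic density so no capacity density condition is required -- combined with Lemma \ref{lem:interior_hoelder_esti} yields $\bar v \in C^{\alpha_1, \alpha_1/2}(\R^n \times [0, T])$ for some $\alpha_1 \in (0, \alpha]$ depending only on $n, L, \alpha$, with $\|\bar v\|_{C^{\alpha_1, \alpha_1/2}} \le C \|u_0\|_{C^\alpha}$.

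\textbf{Step 2 (time-only barrier).} Set $w := u - \bar v|_{D \times (0, T)}$. Then $\mathcal H w = 0$ in $D \times (0, T)$, $w(\cdot, 0) \equiv 0$ on $D$, and $w = g - \bar v =: \tilde g$ on $\partial D \times (0, T)$. The compatibility hypothesis \eqref{eqn:compatibility_cond} forces $\tilde g(\xi, 0) = 0$, so $|\tilde g(\xi, t)| \le C M\, t^{\alpha_1/2}$ on the lateral boundary. The pure-time function $\phi(x, t) := C M\, t^{\alpha_1/2}$ satisfies $\mathcal H\phi = \partial_t \phi \ge 0$ in $D \times (0, T)$, vanishes at $t = 0$, and dominates $\pm \tilde g$ on $\partial D \times (0, T)$. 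Lemma \ref{lem:comparison} therefore yields $|w(x, t)| \le CM\, t^{\alpha_1/2}$ throughout $D \times (0, T)$.

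\textbf{Step 3 (H\"older assembly).} Set $\alpha_{\star\star} := \min(\alpha_0, \alpha_B, \alpha_1)$, where $\alpha_0$ and $\alpha_B$ are the exponents from Lemmas \ref{lem:interior_hoelder_esti} and \ref{lem:boundary_regularity_esti}. A standard dyadic argument gives $\osc_{Q(X_0, r) \cap D \times (0, T)} w \le CM\, r^{\alpha_{\star\star}}$ for every $X_0 \in D \times (0, T)$ and every $r > 0$: at scales $r \gtrsim \delta_\Gamma(X_0)$, one applies either Step 2 (if $\sqrt{t_0}$ realizes the minimum in $\delta_\Gamma$) or a suitable iteration of Lemma \ref{lem:boundary_regularity_esti} -- with $\osc_{\partial D \cap Q} \tilde g \le CM\, r^{\alpha_1}$ -- centered at the nearest lateral-boundary point (when $d(x_0, \partial D)$ realizes the minimum); at smaller scales $r < \delta_\Gamma(X_0)/2$, Lemma \ref{lem:interior_hoelder_esti} applied inside $Q(X_0, \delta_\Gamma(X_0)/2) \subset D \times (0, T)$ bridges from $\delta_\Gamma/2$ down to $r$. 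This yields $[w]_{\alpha_{\star\star}, D \times (0, T)} \le CM$, and adding the H\"older bound on $\bar v$ from Step 1 gives the claimed estimate for $u = \bar v + w$.

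The main obstacle is Step 1: establishing H\"older continuity of the Cauchy solution $\bar v$ up to $t = 0$. Interior estimates cover only $t > 0$, so a dedicated initial-time oscillation-decay lemma is needed; fortunately this is strictly easier than Lemma \ref{lem:boundary_regularity_esti} because $\{t < 0\}$ trivially has full parabolic density. Once Step 1 is in place, the key conceptual move is the pure-time barrier in Step 2, which elegantly sidesteps any separate ``corner'' analysis at $\partial D \times \{0\}$ and reduces the remaining work to a routine dyadic patching of interior and lateral-boundary estimates.
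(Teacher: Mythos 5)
Your proposal follows the same architecture as the paper's proof: decompose $u$ into a Cauchy-problem piece $\bar v$ handling the initial data (via a McShane extension of $u_0$ and an extended operator on $\R^n$), then treat $w = u - \bar v$ as the solution of a zero-initial-data Dirichlet problem, bound it by the pure-time barrier $C M\, t^{\alpha_1/2}$ via the comparison principle, and assemble the H\"older estimate by splitting into the near-boundary and interior cases exactly as in Steps 3--4 of the paper. The only substantive deviation is in Step 1: the paper establishes the estimate $|P\tilde u_0(x,t) - u_0(x)| \le C\,[u_0]_\alpha\, t^{\alpha/2}$ directly from Aronson's Gaussian heat-kernel bounds for the extended operator, whereas you invoke an unstated ``initial-time oscillation-decay'' variant of Lemma~\ref{lem:boundary_regularity_esti}; that variant is indeed standard (the bottom slab has automatic full parabolic density, so no capacity hypothesis is needed), but you would need to record a proof of it, and in this respect the paper's Aronson-based route is the more economical of the two since it appeals to a single citable quantitative estimate.
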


\begin{proof}
\noindent \textbf{Step 1.}
Set $M := \| g \|_{C^{\alpha, \alpha / 2}( \partial D \times (0, T)) ) } + \| u_{0} \|_{C^{\alpha}(D)}$.
Using the McShane extension, we define a H\"{o}lder continuous function $u$ on $\R^{n}$ by
\begin{equation}\label{eqn:mcshane}
\tilde{ u_{0} }(x)
:=
\max\left\{ \sup_{a \in \cl{D} }( u(a) -  [ u_{0} ]_{\alpha, D} |x - a|^{\alpha} ), \, - \| u_{0} \|_{L^{\infty}(D)} \right\}.
\end{equation}
We also extend the parabolic operator $\mathcal{H}$ to $\R^{n} \times \R_{+}$ and denote it by $\tilde{ \mathcal{H} }$.
Let $p(x, y, t)$ be the heat kernel associate with $\tilde{ \mathcal{H} }$.
By the Aronson estimate (see, \cite{MR217444}), we have
\begin{equation}\label{eqn:aronson}
\frac{1}{C t^{n / 2}} \exp \left( - C \frac{|x - y|^{2}}{ t } \right)
\le
p(x, y, t)
\le
\frac{C}{ t^{n / 2}} \exp \left( - \frac{|x - y|^{2}}{C t} \right),
\end{equation}
for all $x, y \in \R^{n}$, $t > 0$.
Define
\[
P \tilde{u}_{0}(x, t)
=
\int_{\R^{n}} p(x, y, t) \tilde{u_{0}}(y) \, dy.
\]
Since $\int_{\R^{n}} p(x, y, t) \, dy = 1$, we have
\begin{equation}\label{eqn:boundedness_of_tilde_u}
\| P \tilde {u}_{0} \|_{L^{\infty}(\R^{n} \times \R_{+})} \le \| \tilde{u}_{0} \|_{L^{\infty}(\R^{n})}.
\end{equation}
Furthermore, since
\[
| P \tilde{u}_{0}(x, t) - u_{0}(x) |
\le
\int_{\R^{n}} p(x, y, t) | \tilde{u}_{0}(y) - u_{0}(x) | \, dy,
\]
using the latter inequality in \eqref{eqn:aronson}, we obtain
\begin{align*}
| P \tilde{u}_{0}(x, t) - u_{0}(x) |
& \le
\int_{\R^{n}} \frac{C}{ t^{n / 2}} \exp \left( - \frac{|x - y|^{2}}{C t} \right) [ \tilde{u}_{0} ]_{\alpha, \R^{n}} (\inrad{D})^{-\alpha} |x - y|^{\alpha} \, dy
\\
& \le
C [ u_{0} ]_{\alpha, D} (\inrad{D})^{-\alpha} t^{\alpha / 2}.
\end{align*}
Hence, if $d_{\R^{n}}(x, y) \ge t^{1 / 2}$ and $s \le t$, then 
\begin{align*}
| P \tilde{u}_{0}(x, t) - P u_{0}(y, s) |
& \le
C [u_{0}]_{\alpha, D} (\inrad{D})^{-\alpha} d_{p}( (x, t), (y, s))^{\alpha / 2}.
\end{align*}
Combining this inequality, Lemma \ref{lem:interior_hoelder_esti} and \eqref{eqn:boundedness_of_tilde_u}, we obtain
\[
\| P u_{0} \|_{C^{\alpha'}(D \times \R_{+})} 
\le
C \| \tilde{u}_{0} \|_{C^{\alpha}( \R^{n} )}
\le
C \| u_{0} \|_{C^{\alpha}(D)}
\le
C M,
\]
where $\alpha' := \min\{ \alpha_{0}, \alpha \}$ is a positive constant depending only on $n$, $L$ and $\alpha$. 

\textbf{Step 2.}
Let $v$ be the solution to the Dirichlet problem
\begin{align*}
\mathcal{H} v = 0 \quad & \text{in} \ D \times (0, T),
\\
v = \tilde{g} :=  g  - P u_{0} \quad & \text{on} \ \partial D \times (0, T),
\\
v(\cdot, 0) = 0 \quad & \text{in} \ D.
\end{align*}
By \eqref{eqn:compatibility_cond}, 
$\tilde{g}(\xi, 0) = 0$ for all $\xi \in \partial D$ and $\tilde{g} \in C^{\alpha', \alpha' / 2}( \partial_{p}( D \times (0, T)) )$.
By the comparison principle, we have
\begin{equation}\label{eqn:esti_of_v}
\| v(\cdot, t) \|_{L^{\infty}(D)} \le C M (\inrad D)^{-\alpha'} t^{ \alpha' / 2}.
\end{equation}
Let $X = (x, t), Y = (y, s) \in D \times (0, T)$.
It is enough to show that
\begin{equation}\label{eqn:hoelder_esti_of_v}
| v(X) - v(Y) | \le C M (\inrad D)^{- \alpha_{\star \star} } d_{p}(X, Y)^{ \alpha_{\star \star} }
\end{equation}
for all $X, Y \in D \times (0, T)$, where $\alpha_{\star \star}$ is a positive constant.

\textbf{Step 3.}
We divide the proof into two cases.
Assume first that
\begin{equation}\label{eqn:case_a}
\max\{ \delta(X), \delta(Y) \} \le 4 d_{p}(X, Y).
\end{equation}
Let  $X_{*}, Y_{*} \in \partial_{p}(D \times (0, T))$ be such that $\delta(X) = d_{p}(X, X_{*})$ and $\delta(Y) = d_{p}(Y, Y_{*})$.
Without loss of generality, we may assume that $X_{*} = (x_{*}, t)$ and $x_{*} \in \partial D$, or $X_{*} = (x, 0)$.
By the triangle inequality, we have
\[
| v(X) - v(Y) |
\le
| v(X) - v(X_{*}) | + | v(X_{*}) - v(Y_{*}) | + | v(Y_{*}) - v(Y) |.
\]
Since
\[
d_{p}(X_{*}, Y_{*})
\le
d_{p}( X_{*}, X) + d_{p}(X, Y) + d_{p}(Y, Y_{*}),
\]
the second term is estimated by
\[
\begin{split}
| v(X_{*}) - v(Y_{*}) | 
\le
[ \tilde{g} ]_{ \alpha', \partial D \times (0, T) } (d_{p}(X, Y) + 2 r)^{\alpha'}
\le
C M (\inrad D)^{- \alpha'} d_{p}(X, Y)^{\alpha'}.
\end{split}
\]
Next, we estimate the other two terms.
If $X_{*} = (x, 0)$, then \eqref{eqn:esti_of_v} yields
\[
|v(X) - v(X_{}*)|
=
| v(x, t) | \le C M (\inrad D)^{-\alpha'} t^{ \alpha' / 2} = C M (\inrad D)^{-\alpha'} \delta(X)^{\alpha'}.
\]
Let $X_{*} = (x_{*}, t)$.
We use Lemma \ref{lem:boundary_regularity_esti} near $X_{*}$.
By a well-known iteration technique (see, e.g., \cite[Lemma 4.6]{MR1465184}), for any $r \le R \le t^{1 / 2}$, we have
\begin{equation}\label{eqn:boundary_regularity}
\osc_{ Q(X_{0}, r) \cap D \times (0, T) } v
\le
C \left( \left( \frac{r}{R} \right)^{ \beta } \osc_{ Q(X_{0}, R) \cap D \times (0, T) } v
+ [ \tilde{g} ]_{\alpha', \partial D \times (0, T) } ( \sqrt{r R} )^{\alpha'}
\right).
\end{equation}
Here, $0 < \beta \le \alpha' / 2$ is a positive constant depending only on $C_{1}$, $C_{2}$ and $\alpha_{0}$.
To estimate the right-hand side, we consider two subcases.
If $t^{1 / 2} \ge \inrad D$, we choose $R = \inrad D$.
Since 
\[
\osc_{ Q(X_{0}, R) \cap D \times (0, T) } v
\le
2 \| u \|_{L^{\infty}(D \times (0, T))} \le 2M,
\]
we get
\[
\osc_{ Q(X_{0}, r) \cap D \times (0, T) } v
\le
C M (\inrad D)^{- \beta} r^{ \beta }.
\]
If $t^{1/ 2} \le \inrad D$, we set $R = t^{1 / 2}$.
Applying \eqref{eqn:esti_of_v} to the right-hand side of \eqref{eqn:boundary_regularity}, we obtain
\[
\begin{split}
\osc_{ Q(X_{0}, r) \cap D \times (0, T) } v
& \le
C M (\inrad D)^{- \alpha'} t^{\alpha' / 2 - \beta / 2} r^{\beta} + C M (\inrad D)^{- \alpha'} (r t^{1 / 2})^{\alpha' / 2}
\\
& \le
C M (\inrad D)^{- \beta} r^{\beta}.
\end{split}
\]
This shows that \eqref{eqn:hoelder_esti_of_v} holds for any $0 < \alpha_{\star \star} \le \beta$.

\textbf{Step 4.}
Suppose now that \eqref{eqn:case_a} does not hold.
Without loss of generality, we may assume $d_{p} (X, Y) \le \delta(X) / 4$.
Let $r := \delta(X)$. 
Take a cylinder $Q(Z, r / 4)$ centered at $X$ or $Y$ such that $X, Y \in Q(Z, r / 4)$.
By the triangle inequality,
\[
\frac{3}{4} r  \le \delta(Z) \le \frac{5}{4} r.
\]
Since $Q(Z, 3r / 4) \subset D \times (0, T)$, Lemma \ref{lem:interior_hoelder_esti} yields
\[
\begin{split}
|v(X) - v(Y)|
\le
C \left( \frac{ d_{p}(X, Y) }{r} \right)^{ \alpha_{0} }
\osc_{Q(Z, r / 2 )} u.
\end{split}
\]
As Step 3, we take $Z_{*}$.
Then, following the same argument as in Step 3, we obtain
\[
\begin{split}
|v(Z) - v(W)|
& \le
|v(Z) - v(Z_{*})|
+
|v(Z_{*}) - v(W)|
\\
& \le
\left( C M \right) ( \inrad{D} )^{- \beta} r^{ \beta }
\end{split}
\]
for all $W \in Q(Z, r / 2)$.
Setting $\alpha_{\star \star} = \min\{ \beta, \alpha_{0} \}$
and combining the above inequalities, we obtain the desired estimate \eqref{eqn:hoelder_esti_of_v}.
\end{proof}

\appendix

\section{Proof of Lemma \ref{lem:boundary_regularity_esti}}\label{sec:boundary_regularity}

\begin{proof}[Proof of Lemma \ref{lem:boundary_regularity_esti}]
\textbf{Step 1.}
We assume that $u$ is a nonnegative supersolution to $\mathcal{H} u = f$ in $Q(\Xi_{0}, 4R)$ in this step.
We write $\Xi_{0} = (\xi_{0}, t_{0})$ and define the following subsets of $Q(\Xi_{0}, 4R)$:
\[
Q_{1}
:=
B(\xi_{0}, R) \times (t_{0} - 12 R^{2}, t_{0} - 8 R^{2}),
\]
\[
Q_{2}
:=
B(\xi_{0}, 2R) \times (t_{0} - 16 R^{2}, t_{0} - 8 R_{2}),
\]
\[
Q_{3}
:=
B(\xi_{0}, 3R) \times (t_{0} - 16 R^{2}, t_{0} - 4 R_{2}).
\]
Following \eqref{eqn:sobolev_bv}, let $m := \inf_{\partial D \times (0, T) \cap Q_{1}} u$.
We define the truncated function $u_{m}(X)$ as
\[
u_{m}^{-}(X)
:=
\begin{cases}
\min\{ u(X), m \} 
& \text{if} \ X \in D \times (0, T),
\\
m
& \text{otherwise.}
\end{cases}
\]
Our goal is to prove that
\begin{equation}\label{eqn:boundary_esti1}
m
\le
\frac{C}{\gamma}
\left(
\essinf_{Q(\Xi_{0}, R)} u_{m}^{-} + k(4R)
\right).
\end{equation}
Here $C$ is a constant depending only on $n$ and $L$.

Take $\eta \in C_{c}^{\infty}(B(\xi, 2R))$ such that $\eta = 1$ on $\cl{B(\xi, R)}$ and $|\nabla \eta| \le C / R$.
Then, $u_{m}^{-}(\cdot, t) \eta \in H_{0}^{1}(B(\xi, 2R))$ and $u_{m}^{-} \eta \ge 1$ a.e. on $\cl{B(\xi, R)}$
for all $t \in (t_{0} - 12 R^{2}, t_{0} - 8 R^{2})$.
By \eqref{eqn:CDC}, we have
\[
m^{2} \gamma R^{n - 2}
\le
C \int_{B(\xi, 2R)} |\nabla (u_{m}^{-}(\cdot, t) \eta )|^{2} \, dx
\]
for all $t \in (t_{0} - 12 R^{2}, t_{0} - 8 R^{2})$.
Integrating this over the specified time interval, we obtain
\begin{equation}\label{eqn:boundary_esti2}
\begin{split}
m^{2} \gamma R^{n}
& \le
C \iint_{Q_{1}} |\nabla (u_{m}^{-}(\cdot, t) \eta )|^{2} \, dx dt
\\
& \le
\frac{2C}{R^{2}}
\iint_{Q_{1}} |u_{m}^{-}(\cdot, t)|^{2} \, dx dt
+
2C \iint_{Q_{1}} |\nabla (u_{m}^{-}(\cdot, t)|^{2} \, dx dt.
\end{split}
\end{equation}
Since $u_{m}^{-} \le m$, Lemma \ref{lem:whi} yields
\[
\begin{split}
\iint_{Q_{2}}
(u_{m}^{-})^{2}
\, dx dt
& \le
m \iint_{Q_{2}}
u_{m}^{-}
\, dx dt
\\
& \le
m
\cdot
C \left(
\essinf_{Q( \Xi_{0}, R)} u_{m}^{-} + k(4R)
\right)
R^{n + 2}.
\end{split}
\]
Thus, the former term on the right-hand side of \eqref{eqn:boundary_esti2}
is controlled by the right-hand side of \eqref{eqn:boundary_esti1}.
Let us consider the latter term.
Take a nonnegative function $\eta \in C^{\infty}(Q_{2})$ such that $\eta = 1$ on $Q_{1}$, 
$\eta = 0$ near the lateral boundary and bottom of $Q_{2}$, and $|\nabla \eta|^{2} + | \frac{ \partial \eta}{\partial t} | \le C R^{2}$.
Testing \eqref{eqn:wf2} with the nonnegative function $\varphi = [(m - u_{m}^{-})]_{h} \eta^{2}$,
we get
\[
\iint_{Q_{1}}
|\nabla u_{m}^{-}|^{2}
\, dx dt
\le
C m \left( 
\frac{1}{R}
\iint_{Q_{2}} |\nabla u_{m}^{-}| \, dx dt 
+
k(4R) R^{n}
\right).
\]
Set $v = u_{m}^{-} + k(4R)$ and fix $-2 / n < p < 0$. By H\"{o}lder's inequality, we have
\[
\iint_{Q_{2}} |\nabla v| \, dx dt
\le
\left(
\iint_{Q_{2}} |\nabla v|^{2} v^{p - 1} \, dx dt 
\right)^{1 / 2}
\left(
\iint_{Q_{2}} v^{1 - p} \, dx dt 
\right)^{1 / 2}.
\]
Take $\eta \in C^{\infty}(Q_{3})$ such that $\eta = 1$ on $Q_{2}$,
$\eta = 0$ near the lateral boundary and top of $Q_{3}$,
and $|\nabla \eta|^{2} + | \frac{ \partial \eta}{\partial t} | \le C R^{2}$.
Since $v$ satisfies $\mathcal{H} v \ge f$ in $Q(4R)$,
substituting the test function $\varphi = [v^{p}]_{h} \eta^{2}$ into \eqref{eqn:wf2} and passing the limit $h \to 0$, 
we obtain
\[
\iint_{Q_{2}} |\nabla v|^{2} v^{p - 1} \, dx dt 
\le
\frac{C}{R^{2}}
\iint_{Q_{3}}
v^{p + 1}
\, dx dt.
\]
Hence, we obtain
\[
\iint_{Q_{2}} |\nabla v| \, dx dt
\le
\left(
\iint_{Q_{3}} v^{p + 1} \, dx dt 
\right)^{1 / 2}
\left(
\iint_{Q_{2}} v^{1 - p} \, dx dt 
\right)^{1 / 2}.
\]
Since $0 < 1 + p < 1 - p < (n + 2) / n$, 
Lemma \ref{lem:whi} yields \eqref{eqn:boundary_esti1}.

\textbf{Step 2.}
Let $u$ be a function in Lemma \ref{lem:boundary_regularity_esti}.
For $0 < r < R$, set 
\[
M(r) := \esssup_{Q( \Xi_{0}, r) \cap D} u,
\quad
m(r) := \essinf_{Q( \Xi_{0}, r) \cap D} u.
\]
Applying \eqref{eqn:boundary_esti1} to $M(4R) - u$, we get
\[
M(4r) - \sup_{\partial D \times (0, T) \cap Q_{1}(r)} u
\le
\frac{C}{\gamma}
\left(
M(4r) - M(r) + k(4r)
\right),
\]
where $Q_{1}(r) = B(\xi_{0}, r) \times (t_{0} - 12 r^{2}, t_{0} - 8 r^{2})$.
Similarly, for $u - m(4r)$, we obtain
\[
\inf_{\partial D \times (0, T) \cap Q_{1}(r)} u - m(4r)
\le
\frac{C}{\gamma}
\left(
m(r) - m(4r) + k(4r)
\right).
\]
Summing these yields
\[
\osc_{Q(\Xi_{0}, r)} u
\le
\left( 1 - \frac{\gamma}{C} \right)
\osc_{Q(\Xi_{0}, 4r)} u
+
\frac{\gamma}{C} \osc_{\partial D \times (0, T) \cap Q(\Xi_{0}, 4r)} u
+
2 k(4r).
\]
Using this inequality iteratively, we arrive at the desired estimate.
\end{proof}

\section*{Acknowledgments}
This work was supported by JSPS KAKENHI (doi:10.13039/501100001691) Grant Number 23H03798.


\bibliographystyle{amsalpha} 
\bibliography{reference}

\providecommand{\bysame}{\leavevmode\hbox to3em{\hrulefill}\thinspace}
\providecommand{\MR}{\relax\ifhmode\unskip\space\fi MR }
\providecommand{\MRhref}[2]{%
  \href{http://www.ams.org/mathscinet-getitem?mr=#1}{#2}
}
\providecommand{\href}[2]{#2}
\begin{thebibliography}{HKM06}

\bibitem[Aik02]{MR1924196}
Hiroaki Aikawa, \emph{H\"{o}lder continuity of the {D}irichlet solution for a
  general domain}, Bull. London Math. Soc. \textbf{34} (2002), no.~6, 691--702.
  \MR{1924196}

\bibitem[Anc86]{MR856511}
Alano Ancona, \emph{On strong barriers and an inequality of {H}ardy for domains
  in {${\bf R}^n$}}, J. London Math. Soc. (2) \textbf{34} (1986), no.~2,
  274--290. \MR{856511}

\bibitem[Aro67]{MR217444}
D.~G. Aronson, \emph{Bounds for the fundamental solution of a parabolic
  equation}, Bull. Amer. Math. Soc. \textbf{73} (1967), 890--896. \MR{217444}

\bibitem[Dav87]{MR879702}
E.~B. Davies, \emph{The equivalence of certain heat kernel and {G}reen function
  bounds}, J. Funct. Anal. \textbf{71} (1987), no.~1, 88--103. \MR{879702}

\bibitem[GL88]{MR951629}
Nicola Garofalo and Ermanno Lanconelli, \emph{Wiener's criterion for parabolic
  equations with variable coefficients and its consequences}, Trans. Amer.
  Math. Soc. \textbf{308} (1988), no.~2, 811--836. \MR{951629}

\bibitem[GZ82]{MR672714}
Ronald Gariepy and William~P. Ziemer, \emph{Thermal capacity and boundary
  regularity}, J. Differential Equations \textbf{45} (1982), no.~3, 374--388.
  \MR{672714}

\bibitem[Har24a]{MR4791992}
Takanobu Hara, \emph{Strong barriers for weighted quasilinear equations}, Ann.
  Fenn. Math. \textbf{49} (2024), no.~2, 529--545. \MR{4791992}

\bibitem[Har24b]{haraCDC}
\bysame, \emph{Uniformly elliptic equations on domains with capacity density
  conditions: Existence of h\"{o}lder continuous solutions and homogenization
  results}, 2024.

\bibitem[Har25a]{haraholder}
\bysame, \emph{Global h\"{o}lder solvability of linear and quasilinear poisson
  equations}, 2025.

\bibitem[Har25b]{MR4971571}
\bysame, \emph{Global {H}\"older solvability of second order elliptic equations
  with locally integrable lower-order coefficients}, Proc. Amer. Math. Soc.
  \textbf{153} (2025), no.~11, 4769--4779. \MR{4971571}

\bibitem[Hir13]{MR3220451}
Kentaro Hirata, \emph{Two sided global estimates of heat kernels in {L}ipschitz
  domains}, Potential theory and its related fields, RIMS K\^oky\^uroku
  Bessatsu, vol. B43, Res. Inst. Math. Sci. (RIMS), Kyoto, 2013, pp.~29--45.
  \MR{3220451}

\bibitem[HKM06]{MR2305115}
Juha Heinonen, Tero Kilpel\"{a}inen, and Olli Martio, \emph{Nonlinear potential
  theory of degenerate elliptic equations}, Dover Publications, Inc., Mineola,
  NY, 2006, Unabridged republication of the 1993 original. \MR{2305115}

\bibitem[KLV21]{MR4306765}
Juha Kinnunen, Juha Lehrb\"{a}ck, and Antti V\"{a}h\"{a}kangas, \emph{Maximal
  function methods for {S}obolev spaces}, Mathematical Surveys and Monographs,
  vol. 257, American Mathematical Society, Providence, RI, 2021. \MR{4306765}

\bibitem[Lan75]{MR399659}
Ermanno Lanconelli, \emph{Sul problema di {D}irichlet per equazioni paraboliche
  del secondo ordine a coefficienti discontinui}, Ann. Mat. Pura Appl. (4)
  \textbf{106} (1975), 11--38. \MR{399659}

\bibitem[Lew88]{MR946438}
John~L. Lewis, \emph{Uniformly fat sets}, Trans. Amer. Math. Soc. \textbf{308}
  (1988), no.~1, 177--196. \MR{946438}

\bibitem[Lie96]{MR1465184}
Gary~M. Lieberman, \emph{Second order parabolic differential equations}, World
  Scientific Publishing Co., Inc., River Edge, NJ, 1996. \MR{1465184}

\bibitem[LzSU68]{MR241822}
O.~A. Lady\v~zenskaja, V.~A. Solonnikov, and N.~N. Ural'ceva, \emph{Linear and
  quasilinear equations of parabolic type}, Translations of Mathematical
  Monographs, vol. Vol. 23, American Mathematical Society, Providence, RI,
  1968, Translated from the Russian by S. Smith. \MR{241822}

\bibitem[Mos64]{MR159139}
J\"urgen Moser, \emph{A {H}arnack inequality for parabolic differential
  equations}, Comm. Pure Appl. Math. \textbf{17} (1964), 101--134. \MR{159139}

\bibitem[Mos71]{MR288405}
J.~Moser, \emph{On a pointwise estimate for parabolic differential equations},
  Comm. Pure Appl. Math. \textbf{24} (1971), 727--740. \MR{288405}

\bibitem[NU11]{MR2760150}
A.~I. Nazarov and N.~N. Ural'tseva, \emph{The {H}arnack inequality and related
  properties of solutions of elliptic and parabolic equations with
  divergence-free lower-order coefficients}, Algebra i Analiz \textbf{23}
  (2011), no.~1, 136--168. \MR{2760150}

\bibitem[Tru68]{MR226168}
Neil~S. Trudinger, \emph{Pointwise estimates and quasilinear parabolic
  equations}, Comm. Pure Appl. Math. \textbf{21} (1968), 205--226. \MR{226168}

\bibitem[Wan90]{MR1010807}
Andreas Wannebo, \emph{Hardy inequalities}, Proc. Amer. Math. Soc. \textbf{109}
  (1990), no.~1, 85--95. \MR{1010807}

\bibitem[Zha02]{MR1900329}
Qi~S. Zhang, \emph{The boundary behavior of heat kernels of {D}irichlet
  {L}aplacians}, J. Differential Equations \textbf{182} (2002), no.~2,
  416--430. \MR{1900329}

\bibitem[Zie80]{MR563383}
William~P. Ziemer, \emph{Behavior at the boundary of solutions of quasilinear
  parabolic equations}, J. Differential Equations \textbf{35} (1980), no.~3,
  291--305. \MR{563383}

\bibitem[Zie82]{MR654859}
\bysame, \emph{Interior and boundary continuity of weak solutions of degenerate
  parabolic equations}, Trans. Amer. Math. Soc. \textbf{271} (1982), no.~2,
  733--748. \MR{654859}

\end{thebibliography}


\end{document}